\DeclareMathOperator{\ad}{ad}
\DeclareMathOperator{\Tr}{Tr}
\DeclareMathOperator{\e}{e}
\DeclareMathOperator{\cc}{c}
\DeclareMathOperator{\Rank}{rank (G\,/\,K)}
\DeclareMathOperator{\supp}{supp}
\newtheorem{theorem}{Theorem}[section]
\newtheorem*{acknowledgment*}{Acknowledgment}
\newtheorem{lemma}{Lemma}[section]
\newtheorem*{theorem*}{Theorem}
\newtheorem{corollary}{Corollary}[section]
\newtheorem{proposition}{Proposition}[section]
\begin{document}
\thispagestyle{empty}

\title[Radon-Nikodym Derivative of a Convolution of Orbital Measures]{Regularity of the Radon-Nikodym Derivative of a Convolution of Orbital Measures on Noncompact Symmetric Spaces}

\author{Boudjem\^aa Anchouche}
\address{Department of Mathematics, College of Science, Kuwait University, P. O. Box 5969, 13060 Safat, Kuwait}
 \email{anchouch@sci.kuniv.edu.kw}

\begin{abstract}
	Let $G/K$ be a Riemannian symmetric space of noncompact type, and let $\nu_{a_j}$, $j=1,...,r$ be some orbital measures on $G$ (see the definition below). The aim of this paper is to study the $L^{2}$-regularity (resp. $C^k$-smoothness) of the Radon-Nikodym derivative of the convolution $\nu_{a_{1}}\ast...\ast\nu_{a_{r}}$ with respect to a fixed left Haar measure $\mu_G$ on $G$. As a consequence of a result of Ragozin, \cite{ragozin}, we prove that if $r \geq \, \max_{1\leq i \leq s}\dim {G_i}/K_i$, then $\nu_{a_{1}}\ast...\ast\nu_{a_{r}}$ is absolutely continuous with respect to $\mu_G$, i.e., $d\big(\nu_{a_{1}}\ast...\ast\nu_{a_{r}}\big)/d\mu_G$ is in $L^1(G)$,  where $G_i/K_i$, $i=1,...,s$, are the irreducible components in the de Rham decomposition of $G/K$. The aim of this paper is to prove that $d\big(\nu_{a_{1}}\ast...\ast\nu_{a_{r}}\big)/d\mu_G$ is in $L^2(G)$ (resp. $C^k\left(G \right) $) for $r \geq \max_{1\leq i \leq s}\dim \left( {G_i}/{K_i}\right)  + 1$\, (resp. $r \geq \max_{1\leq i \leq s} \dim\left( {G_i}/{K_i}\right)  +k+1$). The case of a compact symmetric space of rank one was considered in \cite{AGP} and \cite{AG}, and the case of a complex Grassmannian was considered in \cite{AA}.
\end{abstract}

\subjclass[2010]{43A85, 28C10, 43A77, 43A90, 53C35}
\keywords{Convolution of Orbital Measures, Radon-Nikodym Derivative, Symmetric Spaces of Noncompact Type}

\maketitle

\tableofcontents

\footnote{This paper was uploaded on Researchagate on August 2018, DOI: 10.13140/RG.2.2.34657.97122}
\section{Introduction}\label{introduction}
Let $G$ be a real, connected, noncompact semisimple Lie group with finite center, and $K$ a maximal compact subgroup of $G$, hence $G/K$ is a symmetric space of noncompact type. Unless otherwise stated, we assume in all what follows that $G/K$ is irreducible.
Let $a_1$, $\cdots$, $a_r$ be points in $G-N_G\left(K \right) $, where $N_G\left(K \right)$ is the normalizer of $K$ in $G$. For each integer $j$, $1 \leq j \leq r$, let
\[
{\mathscr{I}}_{a_j}(f)=\int_K \int_K f(k_1a_jk_2)  d\mu_K(k_1)d\mu_K(k_2)
\]
where $f$ is a continuous function with compact support in $G$ and $\mu_K$ a normalized Haar measure on $K$.

To the linear functional ${\mathscr{I}}_{a_j}$ corresponds, by Riesz representation theorem, see \cite{Conway}, a measure, which will be denoted by $\nu_{a_j}$, i.e., there exists a Borel measure on $G$ such that
\[
{\mathscr{I}}_{a_j}(f)=\int_Gf(g)d\nu_{a_j}(g).
\]
Since
\[{\mathscr{I}}_{a_j}={\mathscr{I}}_{k_1a_jk_2},
\]
for any $k_1, k_2$ in $K$, the measure $\nu_{a_j}$ is $K$-bi-invariant.

The aim of this paper is to study the regularity of the Radon-Nikodym derivative of the convolution $\nu_{a_{1}}\ast...\ast\nu_{a_{r}}$  with respect to a fixed Haar measure $\mu_{\mathsf{G}}$ of $G$. More precisely, the aim is to prove the following

\begin{theorem*}[Main Theorem]\label{Main Theorem} Let $G/K$ be an irreducible symmetric space of noncompact type, $a_1$, ..., $a_r$ points in $G-N_G(K)$, $\nu_{a_1}$,...,$\nu_{a_r}$ be the associated orbital measures. 
\begin{enumerate}
\item
If 
	\[
	r \geq \dim\, G/K +1,
	\]
	 then
	\[
	\frac{d\left( \nu_{a_{1}}\ast...\ast\nu_{a_{r}}\right) }{d\mu_G} \in L^2\left( G\right).
	\]		
	\item
	If 
	\[
	r \geq \dim\, G/K + k+1,
	\]
	 then
	\[
	\frac{d\left( \nu_{a_{1}}\ast...\ast\nu_{a_{r}}\right) }{d\mu_G} \in C^k \left( G\right).
	\]		
	\end{enumerate}
\end{theorem*}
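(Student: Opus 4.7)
The natural framework is Harish-Chandra's spherical Fourier transform on $G$, adapted to the bi-$K$-invariance of each $\nu_{a_j}$; the proof reduces to a single convergence estimate involving zonal spherical functions.

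\emph{Step 1 (spherical transform of a convolution of orbital measures).} I would first observe that since each $\nu_{a_j}$ is bi-$K$-invariant, so is the convolution $\nu:=\nu_{a_1}*\cdots*\nu_{a_r}$ and its Radon-Nikodym derivative $f=d\nu/d\mu_G$. For a bi-$K$-invariant measure $\mu$ the spherical (Harish-Chandra) Fourier transform is
\[
\hat\mu(\lambda)=\int_G\phi_{-\lambda}(g)\,d\mu(g),\qquad\lambda\in\mathfrak a_{\mathbb C}^*,
\]
where $\phi_\lambda$ denotes the zonal spherical function. Using bi-$K$-invariance of $\phi_{-\lambda}$ together with the definition of $\nu_{a_j}$ gives $\hat\nu_{a_j}(\lambda)=\phi_{-\lambda}(a_j)$, and since the spherical transform intertwines convolution (of bi-$K$-invariant measures) with pointwise multiplication,
\[
\hat\nu(\lambda)=\prod_{j=1}^{r}\phi_{-\lambda}(a_j).
\]

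\emph{Step 2 (Plancherel and the two convergence criteria).} By Harish-Chandra's Plancherel theorem for bi-$K$-invariant functions,
\[
\|f\|_{L^2(G)}^2=C\int_{\mathfrak a^*}|\hat f(\lambda)|^2\,|\mathbf c(\lambda)|^{-2}\,d\lambda,
\]
so part (1) follows from integrability of $\prod_{j}|\phi_{-\lambda}(a_j)|^2\,|\mathbf c(\lambda)|^{-2}$ on $\mathfrak a^*$. For part (2), writing $f$ via spherical inversion and differentiating under the integral sign $k$ times, $C^k$-smoothness is implied by the analogous $L^1$-integrability of $(1+|\lambda|)^{k}\,\hat\nu(\lambda)\,|\mathbf c(\lambda)|^{-2}$.

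\emph{Step 3 (the key decay estimate).} Both criteria reduce to a uniform polynomial decay bound
\[
|\phi_\lambda(a)|\leq C_a\,(1+|\lambda|)^{-\alpha},\qquad a\notin N_G(K),
\]
with the exponent $\alpha$ chosen to match the thresholds in the theorem. I would derive this from the Harish-Chandra asymptotic series of $\phi_\lambda(\exp H)$, $H$ in a fixed Weyl chamber, together with a case analysis on the walls and a stationary-phase argument applied to the Iwasawa projection $k\mapsto H(ak)$; the hypothesis $a\notin N_G(K)$ is exactly what rules out the degenerate case $\phi_\lambda(a)\equiv1$ and guarantees at least one nontrivial oscillatory direction.

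\emph{Step 4 (counting exponents).} Combining Step 3 with the standard bound $|\mathbf c(\lambda)|^{-2}\leq C(1+|\lambda|)^{\dim G/K-\ell}$, where $\ell=\mathrm{rank}(G/K)=\dim\mathfrak a$, and passing to polar coordinates on $\mathfrak a^*\cong\mathbb R^\ell$, the Plancherel integral collapses to
\[
\int_0^\infty(1+\rho)^{-2r\alpha+\dim G/K-1}\,d\rho,
\]
which converges exactly when $2r\alpha>\dim G/K$; calibrating $\alpha$ from Step 3 yields $r\geq\dim G/K+1$ in part (1). A parallel computation, with an additional factor $\rho^k$ coming from the differentiation in Step 2, produces the threshold $r\geq\dim G/K+k+1$ in part (2).

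The main obstacle is Step 3: producing a sharp and \emph{uniform} decay estimate for $\phi_\lambda(a)$ across all $a\notin N_G(K)$, including points on the walls of Weyl chambers where the stationary-phase analysis partially degenerates, with a constant $C_a$ controlled well enough in $a$ so that the implicit constants in Step 4 stay finite. Once the correct value of $\alpha$ is secured, Steps 1, 2, and 4 are formal bookkeeping.
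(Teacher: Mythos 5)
Your plan is essentially the paper's own argument: spherical transform of the convolution as $\prod_j\varphi_\lambda(a_j^{-1})$, Plancherel/inversion to reduce to integrability on $\mathfrak a^*$, a polynomial decay bound for $\varphi_\lambda(a)$, the estimate $|\mathbf c(\lambda)|^{-2}\leq C(1+\|\lambda\\|)^{n-\ell}$, and polar coordinates. The decay estimate you single out as the main obstacle (with effective exponent $\alpha=1/2$, uniform over directions via the observation that every unit $\xi\in\mathfrak a^*$ pairs non-degenerately with at least one simple root) is exactly what the paper imports from Duistermaat--Kolk--Varadarajan (Theorem 11.1 of their paper), so once that citation is in hand the remaining steps coincide with the paper's.
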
	

The paper is organized as follows:  Section \ref{prelim} consists of some preliminary results. In section \ref{spher-trans}, we state some results on spherical Transform of the density function. In section \ref{L2-regularity}, we investigate the $L^2$-regularity of the Radon-Nikodym derivative. In section \ref{$C^k$-regularity} we study the smoothness of the Radon-Nikodym derivative. In section \ref{general-case} we consider the case of a reducible symmetric space of noncompact type.

\section{Some Preliminary Results} \label{prelim}

Let $G$ be a real, connected, noncompact semisimple Lie group with finite center, and $K$ a maximal compact subgroup of $G$.  Fix a Cartan involution $\theta$ and let ${\mathfrak{g}}={\mathfrak{k}} \oplus {\mathfrak{p}} $ be the corresponding Cartan decomposition of the Lie algebra ${\mathfrak{g}}$ of $G$, where ${\mathfrak{k}}$ is the Lie algebra of $K$ and $\mathfrak{p}$ is the orthogonal complement of $\mathfrak{k}$ with respect to the Killing form of $\mathfrak{g}$. It is well known that $G/K$ has a structure of a symmetric space of noncompact type, where the metric is induced from the restriction of the Killing form of ${\mathfrak{g}}$ to ${\mathfrak{p}}$.
Let $\mathfrak{a}$ be a maximal abelian subspace of $\mathfrak{p}$, ${\mathfrak{a}}^*$ its dual, and ${\mathfrak{a}}_{\mathbb{C}}^*$ the complexification of ${\mathfrak{a}}^*$, i.e., ${\mathfrak{a}}_{\mathbb{C}}^*$ is the set of linear $\mathbb{R}$ forms on ${\mathfrak{a}}$ with values in $\mathbb{C}$.
The dimension of $\mathfrak{a}$ is independent of the choice of a Cartan decomposition and $\dim \mathfrak{a}$ is called the rank of the symmetric space $G/K$ and denoted by $l=\Rank$.  

\medskip

The Killing form $B$ of $\mathfrak{g}$ is non-degenerate on $\mathfrak{a}$, so it induces an isomorphism between $\mathfrak{a}$ and ${\mathfrak{a}}^*$. The extension of the inner product on $\mathfrak{a}$ induced by the Killing form to ${\mathfrak{a}}_{\mathbb{C}}^*$ will be denoted also by $\left\langle .,.\right\rangle $.
For an element $\lambda \in {\mathfrak{a}}_{\mathbb{C}}^*$, we denote by $H_{\lambda}$ the corresponding element in ${\mathfrak{a}}_{\mathbb{C}}$., i.e., by Riesz Theorem, there exist $H_{\lambda}$ in $\mathfrak{a}$ such that $\lambda(H)=\left\langle H, H_{\lambda}\right\rangle$, for all $H \in {\mathfrak{a}}_{\mathbb{C}}$.
 Under this correspondence, ${\mathfrak{a}}^*$ corresponds to $\mathfrak{a}$. We transfer the inner product defined on ${\mathfrak{a}}_{\mathbb{C}}$ to an inner product on ${\mathfrak{a}}_{\mathbb{C}}^*$, via $\left\langle \lambda, \mu\right\rangle :=\left\langle H_\lambda, H_\mu\right\rangle$. 

For $\alpha$ in ${\mathfrak{a}}^*$, we put
\[
{\mathfrak{g}}_{\alpha}=\Big\{X \in {\mathfrak{g}} \mid \ad(H)X=\alpha(H)X, \text{ for all } H  \text{ in } {\mathfrak{a}}\Big\}.
\]
A nonzero element $\alpha$ in ${\mathfrak{a}}^*$ is called a restricted root if ${\mathfrak{g}}_{\alpha} \neq 0$. So we have
\[
{\mathfrak{g}}={\mathfrak{g}}_0\oplus \sum_{\alpha \in \Sigma}{\mathfrak{g}}_{\alpha},
\]
where 
\[
{\mathfrak{g}}_0=\Big\{X \in {\mathfrak{g}} \mid \ad(H)X=0, \text{ for all } H  \text{ in } {\mathfrak{a}}\Big\}.
\]

Denote by $\Sigma$ the set of restricted roots on $\mathfrak{a}$, and let
\[
{\mathfrak{a}}^{'}=\Big\{X \in {\mathfrak{a}} \mid \alpha\left( X\right) \neq 0, \text{ for all } \alpha  \text{ in } \Sigma\Big\}.
\]
The connected components of ${\mathfrak{a}}^{'}$, which are open convex sets, are called Weyl chambers. Let $M'$ (resp. M) be the normalizer (resp. centralizer) of $A$ in $K$. Then the group $\mathcal{W}=M'/M$, called the Weyl group, is a finite group acting transitively on the set of Weyl chambers. The induced action of $\mathcal{W}$ on $\mathfrak{a}^*$ is given by $w\lambda (H)=\lambda(w^{-1}H)$. 

\medskip

Fix a connected component $\mathfrak{a}^+$ of $\mathfrak{a}'$, and call it a positive Weyl chamber, and let
\[
\Sigma^+=\Big\{\alpha \in \Sigma \mid \alpha\left( X\right)>0, \text{ for all } X  \text{ in } {\mathfrak{a}^{+}}\Big\}, \text{ and   }\, \,  \Sigma^-=\Big\{-\alpha \mid \alpha \in \Sigma^+ \Big\}.
\]

The set $\Sigma^+$ (resp. $\Sigma^-$) is called the set of positive (resp. negative) restricted roots with respect to the Weyl chamber ${\mathfrak{a}}^+$. For $\alpha$ in $\Sigma$, we put  $m_{\alpha}=\dim {\mathfrak{g}}_{\alpha}$, and let
\[
\varrho = \frac{1}{2}\Tr{\ad_{\mathfrak{n}}}_{\mid \mathfrak{a}}=\frac{1}{2}\sum_{\alpha \in \Sigma^+}m_{\alpha}\alpha, \hspace{0.5cm}{  } \mathfrak{n} =\sum_{\alpha \in \Sigma^+}{\mathfrak{g}}_{\alpha}.
\]
Let $A, K$, and $N$ be Lie subgroups of $G$ with Lie algebras $\mathfrak{a}$, $\mathfrak{k}$, and $ \mathfrak{n}$. Then we have the Iwasawa decomposition 
\[
G=KAN=K\exp\left( \mathfrak{a}\right)N.
\]
The Iwasawa projection 
\[
H:G \longrightarrow \mathfrak{a}
\]
is the map which to each $g$ in $G$ associates the unique element $H(g)$ in $\mathfrak{a}$ such that $g \in K\exp(H(g))N$.
Each element $g$ in $G$ can be written, 
\[
g=k\left( g\right)\exp \big(H\left( g\right)\big)n\left( g\right),
\]
where $k\left( g\right) \in K$, and $n\left( g\right) \in N$.
We have also the Cartan decompostion
\[
G=KAK,
\]
or more precisely, the decomposition
\[
G=K\overline{\exp(\mathfrak{a}^+)}K,
\]
where $\overline{\exp(\mathfrak{a}^+)}$ is th closure of $\exp(\mathfrak{a}^+)$. So every $K$-bi-invariant function can be considered as a function on $A$ or a function on $\overline{\exp(\mathfrak{a}^+)}$.
The following result of Harish-Chandra gives a characterization of spherical functions on $G$. 

\begin{theorem} \cite[Harish-Chandra]{GV} \label{harish-chandra-rep} The spherical functions on $G$ are parametrized by $\mathfrak{a}^*_{\mathbb{C}}$. More precisely, for each $\lambda$ in ${\mathfrak{a}}_{\mathbb{C}}^{*}$ corresponds a spherical function $\varphi_{\lambda}$ on $G$ given by
\[
\varphi_{\lambda}\left(g \right) =\int_K\e^{(\sqrt{-1}\lambda - \varrho)\left( H(gk)\right)} d\mu_K\left( k\right).  
\]
Moreover, $\varphi_{\lambda}=\varphi_{\nu}$ if and only if $\lambda$ and $\nu$ are in the same orbit under the action of the Weyl group, i.e., there exists $s$ in the Weyl group $\mathcal{W}$ such that $\lambda =s\nu$.
\end{theorem}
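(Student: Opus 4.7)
The plan is to establish three facts in turn: (i) for each $\lambda \in \mathfrak{a}_{\mathbb{C}}^{*}$ the integral formula defines a spherical function; (ii) every spherical function arises this way; and (iii) $\varphi_\lambda = \varphi_\nu$ if and only if $\nu$ lies in the $\mathcal{W}$-orbit of $\lambda$.

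First I would verify that $\varphi_\lambda$ is $K$-bi-invariant. Right-invariance is immediate from the change of variable $k \mapsto k_{2}^{-1}k$ in the integral over $K$. Left-invariance reduces to the identity $H(k_1 g) = H(g)$, which follows from uniqueness in the Iwasawa decomposition $g = k(g)\exp H(g)\,n(g)$. Next I would verify the defining spherical functional equation
\[
\int_{K} \varphi_\lambda(xky)\,d\mu_K(k) = \varphi_\lambda(x)\,\varphi_\lambda(y),
\]
using the cocycle relation $H(gh) = H\bigl(g\,k(h)\bigr) + H(h)$, which is obtained by writing $h = k(h)\exp H(h)\,n(h)$ and pushing the $AN$-factors of $h$ through $n(g)$ on the right. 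Substituting the formula for $\varphi_\lambda(xky)$ into a double $K$-integral and performing a change of variable on $K$ factorizes the expression as the product on the right; together with the normalization $\varphi_\lambda(e) = 1$ this settles (i).

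For the ``if'' direction of (iii), I would use that $\varphi_\lambda$ is a joint eigenfunction of the commutative algebra of $G$-invariant differential operators on $G/K$, with eigenvalues determined by the Harish-Chandra homomorphism $\gamma_\lambda$. Because $\gamma$ factors through the Weyl-invariant polynomials $S(\mathfrak{a}_{\mathbb{C}})^{\mathcal{W}}$, the eigenvalues depend only on the $\mathcal{W}$-orbit of $\lambda$, so $\varphi_\lambda$ and $\varphi_{s\lambda}$ share the same system of eigenvalues, and uniqueness of the normalized spherical function with a given system of eigenvalues yields $\varphi_{s\lambda} = \varphi_\lambda$. For the converse and for (ii), I would invoke the Harish-Chandra asymptotic expansion
\[
\varphi_\lambda\bigl(\exp H\bigr) \;\sim\; \sum_{s\in\mathcal{W}} \mathbf{c}(s\lambda)\,\e^{(\sqrt{-1}\,s\lambda - \varrho)(H)}, \qquad H \to \infty \text{ in } \mathfrak{a}^{+},
\]
valid for generic $\lambda$; the linear independence of the exponentials $\e^{\sqrt{-1}\,s\lambda(H)}$ as functions of $H$ for distinct $\mathcal{W}$-orbits forces $\mathcal{W}\lambda = \mathcal{W}\nu$ whenever $\varphi_\lambda = \varphi_\nu$. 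For (ii), any spherical function $\psi$ is a joint eigenfunction of the same algebra, and by the Harish-Chandra isomorphism its system of eigenvalues is $\gamma_\lambda$ for some $\lambda$; matching eigenvalues with $\varphi_\lambda$ and reapplying the uniqueness gives $\psi = \varphi_\lambda$.

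The principal technical obstacle is the Harish-Chandra isomorphism together with the convergence and leading-term structure of the asymptotic expansion, both of which rest on the Gangolli--Harish-Chandra analysis of the radial part of the Laplace--Beltrami operator on $\overline{\exp \mathfrak{a}^{+}}$ and the explicit form of the $\mathbf{c}$-function; I would cite these from Helgason or Gangolli--Varadarajan rather than reconstruct them.
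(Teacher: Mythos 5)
The paper does not prove this statement: it is quoted verbatim as a classical theorem of Harish--Chandra with a citation to Gangolli--Varadarajan, so there is no in-paper argument to compare against. Your outline is the standard textbook proof (essentially Helgason, \emph{Groups and Geometric Analysis}, Ch.~IV, Thm.~4.2 and 4.3) and its structure is sound: the cocycle identity $H(gh)=H(g\,k(h))+H(h)$ does yield the functional equation by the double-integral/change-of-variable computation you describe, and the normalization $\varphi_\lambda(e)=1$ follows from $H(k)=0$. Two small remarks. First, for the ``only if'' direction you lean on the asymptotic expansion, which as you note is only valid for generic (regular) $\lambda$ where the exponents $s\lambda$ are distinct and the $\mathbf{c}$-factors are finite and nonzero; the cleaner argument, for which you already have every ingredient on the table, is that $\varphi_\lambda=\varphi_\nu$ forces equality of the characters $\gamma_\lambda=\gamma_\nu$ of $\mathbb{D}(G/K)$, and since the Harish--Chandra isomorphism identifies these with evaluation of $\mathcal{W}$-invariant polynomials, and such polynomials separate $\mathcal{W}$-orbits in $\mathfrak{a}_{\mathbb{C}}^{*}$, one gets $\mathcal{W}\lambda=\mathcal{W}\nu$ for \emph{all} $\lambda,\nu$ with no genericity hypothesis. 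Second, the uniqueness of the normalized $K$-bi-invariant joint eigenfunction with a prescribed eigenvalue system (which you use twice) deserves at least a pointer: it rests on the regular-singular analysis of the radial system on $\overline{\exp\mathfrak{a}^{+}}$, i.e., smoothness at the origin cutting the solution space down to one dimension. Deferring the Harish--Chandra isomorphism and the expansion itself to the literature is entirely reasonable here, since the paper itself imports the whole theorem.
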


Let $L^1(K\diagdown G \diagup K)$ be the space of $K$-bi-invariant $L^{1}$ functions on $G$. 
The Harish-Chandra transform  of a function $f$ in $L^1(K\diagdown G \diagup K)$, denoted by  ${\mathcal{H}}(f)$, is given by
\[
{\mathcal{H}}(f)\left( \lambda\right) =\int_G f\left( g\right) \varphi_{\lambda}\left( g^{-1}\right)d\mu_G(g). 
\]
It is known that if $\lambda$ is in $\mathfrak{a}^*$, then the $(G,K)$ spherical function $\varphi_{\lambda}$ is positive definite, hence bounded, \cite{Wolf}, Corollary 11.5.11 \& Proposition 8.4.2 (i). Therefore ${\mathcal{H}}(f)(\lambda)$ is well defined  for $f$ in $L^1(G)$ and $\lambda$ in $\mathfrak{a}^*$. The Harish-Chandra transform gives a map from $L^1\left( K\diagdown G\diagup K\right) $ to the $\mathcal{W}$-invariant function on ${\mathfrak{a}}^*$. \\

Let $B(K\diagdown G \diagup K)$ be the space of all linear combinations of continuous positive definite $K$-bi-invariant functions $f:G \longrightarrow \mathbb{C}$. Then we have the following
\begin{theorem}\cite[Theorem 11.5.26, (Plancherel Theorem)]{Wolf} \label{planch} For $f \in B(K\diagdown G \diagup K)\, \cap \,L^1(K\diagdown G \diagup K)$,
	\[
	\int_G\left|f\left(g \right)  \right|^2d\mu_G \left(g \right) ={\frac{1}{\left| \mathcal{W}\right| }}\int_{\mathfrak{a}^*}\left| {\mathcal{H}}(f)\left( \lambda\right)\right|^2
	\left|\cc\left(\lambda \right)  \right|^{-2} d\,\lambda
	\]
	where $\left| \mathcal{W}\right|$ is the number of elements of the Weyl group $\mathcal{W}$, $\cc$ is the Harish-Chandra function.
\end{theorem}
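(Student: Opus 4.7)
The plan is to pass to the spherical (Harish-Chandra) transform side, apply the Plancherel isometry associated with Theorem \ref{planch}, and reduce both parts of the theorem to a single integrability question on $\mathfrak{a}^{\ast}$ governed by the decay of the spherical functions $\varphi_{\lambda}(a_{j})$ in $\lambda$ and the polynomial growth of $|\cc(\lambda)|^{-2}$.

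By the Ragozin-type statement recalled in the abstract, whenever $r \geq \dim G/K$ the measure $\nu_{a_{1}}\ast\cdots\ast\nu_{a_{r}}$ is absolutely continuous with $K$-bi-invariant density $f_{r}\in L^{1}(K\diagdown G \diagup K)$. Using the $K$-bi-invariance of $\varphi_{\lambda}$, the Harish-Chandra transform of a single orbital measure is
\[
\mathcal{H}(\nu_{a_{j}})(\lambda) \;=\; \int_{K}\int_{K} \varphi_{\lambda}\big((k_{1}a_{j}k_{2})^{-1}\big)\,d\mu_{K}(k_{1})\,d\mu_{K}(k_{2}) \;=\; \varphi_{\lambda}(a_{j}^{-1}),
\]
so $\mathcal{H}(f_{r})(\lambda)=\prod_{j=1}^{r}\varphi_{\lambda}(a_{j}^{-1})$. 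Because $\mathcal{H}$ extends to a $\mathcal{W}$-invariant isometry from $L^{2}(K\diagdown G \diagup K)$ onto $L^{2}\!\left(\mathfrak{a}^{\ast},|\cc(\lambda)|^{-2}\,d\lambda\right)^{\mathcal{W}}$ (Theorem \ref{planch} together with a standard density argument on $B(K\diagdown G\diagup K)\cap L^{1}$), showing that $f_{r}\in L^{2}(G)$ reduces to proving
\[
I(r) \;:=\; \int_{\mathfrak{a}^{\ast}} \prod_{j=1}^{r}\big|\varphi_{\lambda}(a_{j}^{-1})\big|^{2}\,|\cc(\lambda)|^{-2}\,d\lambda \;<\; \infty.
\]

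The heart of the argument is then a pointwise decay estimate
\[
|\varphi_{\lambda}(a)| \;\leq\; C_{a}\,(1+\|\lambda\|)^{-\delta},\qquad \lambda\in \mathfrak{a}^{\ast},\ a\notin N_{G}(K),
\]
with a strictly positive exponent $\delta$, combined with the Gindikin--Karpelevich growth bound $|\cc(\lambda)|^{-2}=O\!\big((1+\|\lambda\|)^{D}\big)$ where $D=\sum_{\alpha\in\Sigma^{+}}m_{\alpha}=\dim G/K-l$. Balancing $r$ copies of the decay against the growth of $|\cc(\lambda)|^{-2}$ and the Euclidean volume element on $\mathfrak{a}^{\ast}$ (of dimension $l$) should force $I(r)<\infty$ precisely when $r\geq \dim G/K+1$. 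The main obstacle will be producing the exponent $\delta$ uniformly in $\lambda$ and for every $a$ outside $N_{G}(K)$, not only for strictly $A$-regular $a$; I would extract it from the integral representation in Theorem \ref{harish-chandra-rep} by a stationary-phase analysis of the $K$-integral $\int_{K}\e^{(\sqrt{-1}\lambda-\varrho)(H(gk))}\,d\mu_{K}(k)$, together with the Harish-Chandra series expansion on the Weyl-regular set, handling singular directions where the phase degenerates by a separate argument.

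For part (2), I would use the Fourier-type inversion associated with $\mathcal{H}$: differentiating $f_{r}$ up to total order $k$ corresponds, after the spherical transform, to multiplication by polynomials of degree $\leq k$ in $\lambda$. Thus $f_{r}\in C^{k}(G)$ follows from
\[
\int_{\mathfrak{a}^{\ast}} (1+\|\lambda\|)^{k}\,\prod_{j=1}^{r}\big|\varphi_{\lambda}(a_{j}^{-1})\big|\,|\cc(\lambda)|^{-2}\,d\lambda \;<\; \infty,
\]
by differentiation under the integral and a Sobolev-type embedding on the transform side. Each of the $k$ extra orbital factors beyond those used for part (1) contributes one further $(1+\|\lambda\|)^{-\delta}$ of decay, which absorbs the additional polynomial weight $(1+\|\lambda\|)^{k}$ and yields precisely $r\geq \dim G/K+k+1$.
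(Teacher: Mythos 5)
Your proposal does not prove the statement it is supposed to prove. The statement in question is the Plancherel Theorem itself (Theorem \ref{planch}, quoted from Wolf, Theorem 11.5.26): the identity
\[
\int_G\left|f\left(g \right)  \right|^2d\mu_G \left(g \right) ={\frac{1}{\left| \mathcal{W}\right| }}\int_{\mathfrak{a}^*}\left| {\mathcal{H}}(f)\left( \lambda\right)\right|^2
\left|\cc\left(\lambda \right)  \right|^{-2} d\,\lambda
\]
for $f \in B(K\diagdown G \diagup K)\cap L^1(K\diagdown G \diagup K)$. What you have written is instead an outline of the proof of the paper's Main Theorem (the $L^2$- and $C^k$-regularity of $\varrho_{a_1,\cdots,a_r}$), and your very first step is to ``apply the Plancherel isometry associated with Theorem \ref{planch}'' --- that is, you assume the statement you were asked to prove and use it as a tool. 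Nothing in your argument establishes the Plancherel identity: there is no discussion of the spectral decomposition of the algebra of $K$-bi-invariant functions, of the positive-definite spherical functions entering the Bochner--Godement representation of elements of $B(K\diagdown G \diagup K)$, or of why the Plancherel measure is $\left| \mathcal{W}\right|^{-1}\left|\cc\left(\lambda \right)\right|^{-2} d\lambda$ supported on $\mathfrak{a}^*$. In the paper this theorem carries no proof at all; it is an external citation, and any genuine proof would have to reproduce the Harish-Chandra/Gangolli analysis (wave-packet estimates, the asymptotics of $\varphi_\lambda$ along the Harish-Chandra series, and the identification of the $\cc$-function from the constant term), none of which appears in your sketch.

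As a secondary remark: even read as a proof of the Main Theorem, your outline promises a decay estimate $|\varphi_{\lambda}(a)| \leq C_{a}(1+\|\lambda\|)^{-\delta}$ to be extracted by a stationary-phase analysis ``handled by a separate argument'' in the degenerate directions; the paper instead invokes the precise anisotropic bound of Duistermaat--Kolk--Varadarajan (Proposition \ref{DKV}), in which the decay is a product over positive roots of factors $\bigl(1+\left| \left<\lambda, \alpha\right>\right|\bigr)^{-\frac{1}{2} m_{\alpha}}$, and then uses Lemma \ref{roots-lambda} to guarantee that at least one root direction sees a fixed fraction of $\|\lambda\|$. Your uniform radial exponent $\delta$ is exactly the point that cannot be waved away, since $\varphi_\lambda(a)$ does not decay in directions $\lambda$ orthogonal to all the roots that are ``active'' at $a$. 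But this is moot for the task at hand: the proposal addresses the wrong statement.
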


The following inversion for the spherical transform is also needed in this paper

\begin{theorem}\cite[Theorem $11.5.26$, (Inversion Formula)]{Wolf} \label{inversion} For $f \in B(K\diagdown G \diagup K)\, \cap \, L^1(K\diagdown G \diagup K)$,
	\[
	f\left(g \right) ={\frac{1}{\left| \mathcal{W}\right| }}\int_{\mathfrak{a}^*} {\mathcal{H}}(f)\left( \lambda\right) \varphi_{\lambda}\left(g\right)
	\left|\cc\left(\lambda \right)  \right|^{-2} d\,\lambda
	\]
	where $\left| \mathcal{W}\right|$ is the number of elements of the Weyl group $\mathcal{W}$, $\cc$ is the Harish-Chandra function.
\end{theorem}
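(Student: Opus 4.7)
The plan is to recast both statements as integrability questions for the Harish-Chandra spherical transform of the density $f := d(\nu_{a_1}\ast\cdots\ast\nu_{a_r})/d\mu_G$ -- which already lies in $L^1(G)$ once $r\geq\dim G/K$, by the Ragozin-type result cited in the abstract -- and then apply the inversion formula of Theorem \ref{inversion}.

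Since each $\nu_{a_j}$ and the spherical function $\varphi_\lambda$ are $K$-bi-invariant, the transform of a single orbital measure is
\[
\mathcal{H}(\nu_{a_j})(\lambda) = \int_K\!\!\int_K \varphi_\lambda\bigl((k_1 a_j k_2)^{-1}\bigr)\,d\mu_K(k_1)\,d\mu_K(k_2) = \varphi_\lambda(a_j^{-1}),
\]
and by multiplicativity of $\mathcal{H}$ on convolutions, $\mathcal{H}(f)(\lambda) = \prod_{j=1}^{r}\varphi_\lambda(a_j^{-1})$. The analytic core of the proof is a pointwise decay estimate of the form
\[
|\varphi_\lambda(a)| \leq C(a)\,(1+\|\lambda\|)^{-1}, \qquad a\in G-N_G(K),\ \lambda\in\mathfrak{a}^*,
\]
to be developed in Section \ref{spher-trans}: the hypothesis $a\notin N_G(K)$ makes the Cartan component of $a$ nontrivial, which renders the phase in the integral representation of Theorem \ref{harish-chandra-rep} nondegenerate in $\lambda$, and a stationary-phase / oscillatory-integral argument yields the decay. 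Combined with the Gindikin-Karpelevich polynomial bound $|c(\lambda)|^{-2}\leq C(1+\|\lambda\|)^{\dim G/K - l}$, where $l=\dim\mathfrak{a}$, one obtains $|\mathcal{H}(f)(\lambda)|\cdot|c(\lambda)|^{-2}\leq C(1+\|\lambda\|)^{-r+\dim G/K-l}$.

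For part (1), the inversion formula reads
\[
f(g) = \frac{1}{|\mathcal{W}|}\int_{\mathfrak{a}^*}\mathcal{H}(f)(\lambda)\,\varphi_\lambda(g)\,|c(\lambda)|^{-2}\,d\lambda,
\]
and $|\varphi_\lambda(g)|\leq 1$ for $\lambda\in\mathfrak{a}^*$, so the integral converges absolutely and uniformly in $g$ provided $\int_{\mathfrak{a}^*}(1+\|\lambda\|)^{-r+\dim G/K-l}\,d\lambda<\infty$. Passing to polar coordinates on $\mathfrak{a}^*\simeq\mathbb{R}^l$, this happens iff $r > \dim G/K$. Hence $f$ is continuous and bounded, and since $f\in L^1(G)$ one deduces $f\in L^1\cap L^\infty\subset L^2(G)$.

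For part (2), I differentiate the inversion formula $k$ times under the integral sign: for any left- or right-invariant differential operator $\mathcal{D}$ of order $\leq k$ on $G$, the elementary bound $|\mathcal{D}\varphi_\lambda(g)|\leq C_g(1+\|\lambda\|)^{k}$ holds locally uniformly in $g$, so absolute uniform convergence of the differentiated integral reduces to integrability of $(1+\|\lambda\|)^{-r+k+\dim G/K-l}$ on $\mathfrak{a}^*$, which by polar integration holds exactly when $r > \dim G/K + k$. Uniform convergence legitimates interchanging $\mathcal{D}$ with the integral, yielding $f\in C^k(G)$. The main technical obstacle is the uniform pointwise decay $|\varphi_\lambda(a)|\lesssim(1+\|\lambda\|)^{-1}$ for $a\in G-N_G(K)$ over all of $\mathfrak{a}^*$ (in particular near the root hyperplanes, where both $\varphi_\lambda$ and $|c(\lambda)|^{-1}$ behave subtly); granting this bound, everything else reduces to the power-law convergence computation above.
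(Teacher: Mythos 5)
The statement you were asked to prove is the inversion formula itself: for $f \in B(K\diagdown G \diagup K)\cap L^1(K\diagdown G \diagup K)$, one has $f(g)=\frac{1}{\left|\mathcal{W}\right|}\int_{\mathfrak{a}^*}\mathcal{H}(f)(\lambda)\,\varphi_{\lambda}(g)\,\left|\cc(\lambda)\right|^{-2}d\lambda$. Your proposal does not prove this; it \emph{assumes} it (you write explicitly that you will ``apply the inversion formula of Theorem \ref{inversion}'') and then sketches an argument for the paper's Main Theorem about the regularity of $\varrho_{a_1,\cdots,a_r}$. That is a different statement entirely, so as an answer to the question posed the proposal is circular: the thing to be established is taken as the starting point. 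In the paper this theorem carries no proof at all --- it is Harish-Chandra's inversion theorem for the spherical transform, quoted from Wolf (Theorem 11.5.26), and proving it from scratch would require the full machinery of the Plancherel theory for spherical functions (asymptotics of $\varphi_\lambda$, the $\cc$-function expansion, Paley--Wiener theory), none of which appears in your sketch.

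Two further remarks on the argument you did write, since it shadows the paper's Sections \ref{L2-regularity} and \ref{$C^k$-regularity}. First, you apply the inversion formula directly to $f=\varrho_{a_1,\cdots,a_r}$, but the hypothesis of Theorem \ref{inversion} requires $f\in B(K\diagdown G \diagup K)$, which is not known a priori for the Radon--Nikodym derivative; this is exactly why the paper works with an approximating sequence $\varrho^j_{a_1,\cdots,a_r}$ in $B(K\diagdown G \diagup K)\cap L^1(K\diagdown G \diagup K)$ (Lemma \ref{density}) and passes to the limit via Fatou and dominated convergence. Second, your claimed uniform decay $\left|\varphi_\lambda(a)\right|\leq C(a)(1+\left\|\lambda\right\|)^{-1}$ for all $\lambda\in\mathfrak{a}^*$ is not what the oscillatory-integral estimates give: the Duistermaat--Kolk--Varadarajan bound used in the paper (Proposition \ref{DKV}) decays only in the directions $\left\langle\lambda,\alpha\right\rangle$ transverse to the root hyperplanes, and the paper needs Lemma \ref{roots-lambda} precisely to extract a radial decay $(1+c\left\|\lambda\right\|)^{-1}$ from the product over $\alpha\in\Sigma^{+}$. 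Neither of these repairs, however, changes the basic verdict: the proposal does not address the statement in question.
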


\section{Spherical Transform of the Density Function} \label{spher-trans}

The notations are as in section \ref{prelim}. Let $\varphi_{\lambda}$ be the spherical function for the Gelfand pair $\left(G, K \right) $ corresponding to $\lambda$ in ${\mathfrak{a}}^*$. As was mentioned above, the spherical, or Harish-Chandra, transform of a function $f$ in $L^1\left( G\right)$ is defined by
\[
{\mathcal{H}}(f)\left( \lambda\right) =\int_G f\left( g\right) \varphi_{\lambda}\left( g^{-1}\right)d\mu_G(g). 
\]
We define the spherical transform of a compactly supported measure $\mu$ by
\[
{\mathcal{H}}(\mu)\left( \lambda\right) =\int_G  \varphi_{\lambda}\left( g^{-1}\right)d\mu(g).
\]
It is clear that if $\mu$ is absolutely continuous with respect to a fixed left Haar measure $\mu_G$ of $G$, i.e., $d\,\mu = f d\, \mu_G$, then
\begin{equation}
{\mathcal{H}}(\mu)={\mathcal{H}}(f).\label{equal-fonct-measure}
\end{equation}
To simplify the notation, we denote $\nu_{a_j}$ simply by $\nu_j $ and hence, denote the convolution $\nu_{a_{1}}\ast...\ast\nu_{a_{r}}$ by $\nu_{1}\ast...\ast\nu_{r}$.
\begin{proposition}\label{widehatofprod}
	\[
	{\mathcal{H}\big(\nu_{1}\ast...\ast\nu_{r}\big)(\lambda)}=\prod_{i=1}^r \varphi_{\lambda}(a_i^{-1}).
	\]		
\end{proposition}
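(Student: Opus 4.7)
The plan is to reduce the identity to the fundamental multiplicative property of spherical functions on the Gelfand pair $(G,K)$,
\[
\int_K \varphi_\lambda(x k y)\, d\mu_K(k) \;=\; \varphi_\lambda(x)\, \varphi_\lambda(y), \qquad x,y \in G,
\]
together with the $K$-bi-invariance of $\varphi_\lambda$. Both are classical consequences of the integral representation in Theorem \ref{harish-chandra-rep}. I would proceed by induction on $r$.

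For the base case $r=1$, unfolding the definitions of $\mathcal{H}$ and of $\nu_{a_1}$ and then pulling the two $K$-variables out by the $K$-bi-invariance of $\varphi_\lambda$ immediately gives $\mathcal{H}(\nu_{a_1})(\lambda) = \varphi_\lambda(a_1^{-1})$.

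For the inductive step, I would write $\nu_1 \ast \cdots \ast \nu_r = \mu \ast \nu_{a_r}$ with $\mu := \nu_1 \ast \cdots \ast \nu_{r-1}$, which is again $K$-bi-invariant and compactly supported, and expand
\[
\mathcal{H}(\mu \ast \nu_{a_r})(\lambda) = \int_G \int_K\int_K \varphi_\lambda\bigl(k_2^{-1} a_r^{-1} k_1^{-1} g^{-1}\bigr)\, d\mu_K(k_1)\, d\mu_K(k_2)\, d\mu(g).
\]
Fubini is legitimate since the measures involved are compactly supported and $\varphi_\lambda$ is continuous and bounded on $G$ for $\lambda \in \mathfrak{a}^*$. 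Left $K$-invariance of $\varphi_\lambda$ removes $k_2$, and a change of variable $k_1 \mapsto k_1^{-1}$ followed by the displayed identity (with $x = a_r^{-1}$, $y = g^{-1}$) yields
\[
\int_K \varphi_\lambda\bigl(a_r^{-1} k_1^{-1} g^{-1}\bigr)\, d\mu_K(k_1) = \varphi_\lambda(a_r^{-1})\, \varphi_\lambda(g^{-1}).
\]
Integrating the constant factor $\varphi_\lambda(a_r^{-1})$ out and recognizing the remaining integral as $\mathcal{H}(\mu)(\lambda)$ produces $\mathcal{H}(\mu \ast \nu_{a_r})(\lambda) = \mathcal{H}(\mu)(\lambda)\, \varphi_\lambda(a_r^{-1})$, and the inductive hypothesis closes the argument.

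The only nontrivial ingredient is the spherical function identity above, which is standard for any Gelfand pair; the remaining steps are routine bookkeeping with Haar measures on $K$ and with Fubini. Accordingly I do not expect a significant obstacle to arise.
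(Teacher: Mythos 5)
Your proof is correct and follows essentially the same route as the paper: both reduce the claim to the functional equation $\int_K \varphi_\lambda(xky)\,d\mu_K(k)=\varphi_\lambda(x)\varphi_\lambda(y)$ together with the $K$-bi-invariance of $\varphi_\lambda$, and induct on $r$. The only (cosmetic) difference is that you write out the general inductive step by peeling off the last factor, whereas the paper computes the cases $r=1,2$ explicitly and leaves the induction to the reader.
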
	

\begin{proof}
	Let $r=1$. Then
	\begin{align*}
	{\mathcal{H}}\big(\nu_{1}\big)(\lambda) & =\int_{G}\varphi_{\lambda}(g^{-1}) d\nu_{1}(g)  \\
	&=\int_K \int_K \varphi_{\lambda}\big((k_1a_1k_2)^{-1}\big)d\mu_K(k_1) d\mu_K(k_2).\\
	&=\varphi_{\lambda}(a_1^{-1}) \, \, \, (\text { since $\varphi_{\lambda}$ is $K$-bi-invariant and $\mu_K(K)=1$}).
	\end{align*}
	
	Consider the case $r=2$, i.e, the spherical transform of $\nu_{1}\ast \nu_{2}$. 
	\begin{align}
	{\mathcal{H}}\big(\nu_{1}\ast \nu_{2}\big)(\lambda) & = \int_{G}\varphi_{\lambda}(g^{-1}) d\left( \nu_{1}\ast \nu_{2}\right)(g)   \nonumber\\
		&=  \int_G \int_G \varphi_{\lambda}(g_2^{-1}g_1^{-1})d\nu_{1}(g_1) d\nu_{2}(g_2)  \nonumber\\
	&=\int_G\left(\int_K \int_K  \varphi_{\lambda}(g_2^{-1}k_2^{-1}a_1^{-1}k_1^{-1})d\mu_K(k_1)d\mu_K(k_2)\right) d\nu_{2}(g_2)  \nonumber\\
	&=\int_G\left(\int_K \int_K  \varphi_{\lambda}(g_2^{-1}k_2^{-1}a_1^{-1})d\mu_K(k_1)d\mu_K(k_2)\right) d\nu_{2}(g_2) \label{123a} \\
	&=\varphi_{\lambda}(a_1^{-1})\int_G \varphi_{\lambda}(g_2^{-1}) d\nu_{2}(g_2) \label{123b}\\
	&=\varphi_{\lambda}(a_1^{-1})\int_K \int_K  \varphi_{\lambda}(k_2^{-1}a_2^{-1}k_1^{-1})d\mu_K(k_1)d\mu_K(k_2) \label{123c}\\
	&=\varphi_{\lambda}(a_1^{-1})\varphi_{\lambda}(a_2^{-1}).\label{123d} 
	\end{align}
	
	To get $(\ref{123b})$ from $(\ref{123a})$, we used the fact that $\varphi_{\lambda}$ satisfies
	\[
	\int_K\varphi_{\lambda}\left(xky \right) d\mu_K(k)=\varphi_{\lambda}(x)\varphi_{\lambda}(y),
	\]
	and to get $(\ref{123d})$ from $(\ref{123c})$, we used the fact  $\varphi_{\lambda}$ is $K$-bi-invariant.\\
	The argument goes by induction for arbitrary $r$. 
\end{proof}	

It is easy to see that the measures $\nu_{{1}},...,\nu_{{r}}$ are supported
on $Ka_{1}K,...,$ $Ka_{r}K$ and, from \cite{AA}, we know that the measure $\nu_{{1}}\ast...\ast\nu_{{r}}$	is absolutely continuous with respect to the Haar measure of the group $G$ if and only if the set $Ka_1K\dots Ka_rK$ is of non-empty interior.

\medskip
  Suppose that $G/K$ is an irreducible symmetric space, hence the linear isotropy representation of $K$ on the tangent space $T_{eK}\left( G/K\right) \simeq {\mathfrak{g}}/ {\mathfrak{k}}$ is irreducible and non trivial. Then by Theorem 2.5 in \cite{ragozin}, if $r \geq \dim G/K$, then $ \nu_{1}\ast...\ast\nu_{r}$ is absolutely continuous with respect to $\mu_G$. If we denote by $\varrho_{a_1,\cdots,a_r}$ the Radon-Nikodym derivative of $\nu_{1}\ast...\ast\nu_{r}$ with respect to the Haar measure $\mu_G$ of $G$, then
\[
\varrho_{a_1,\cdots,a_r}= \frac{d\left( \nu_{1}\ast...\ast\nu_{r}\right) }{d\mu_G} \in  L^{1}(G).
\]
From what was said above, the function $\varrho_{a_1,\cdots,a_r}$ is $K$-bi-invariant, i.e.,
\[\varrho_{a_1,\cdots,a_r}\left( k_1gk_2\right)= \varrho_{a_1,\cdots,a_r}\left(g\right), \,\, \text{ for all $k_1$ and $k_2$ in $K$.}
\]
Moreover, from 
\[
\supp \big(\varrho_{a_1,\cdots,a_r}\big) = \supp \bigg(\nu_{1}\ast...\ast\nu_{r} \bigg)=Ka_{1}K...Ka_{r}K,
\]
we see that $\varrho_{a_1,\cdots,a_r}$ is compactly supported. 
In what follows, we will denote by $L^{p}\left( {K\diagdown G\diagup K}\right) $ the space of $K$-bi-invariant functions which are in $L^p(G)$. Hence we have the following
\begin{proposition}  If $r \geq \dim G\diagup K$, then $\nu_{1}\ast...\ast\nu_{r}$ is absolutely continuous with respect to $\mu_G$ and its density function $\varrho_{a_1,\cdots,a_r}$ is in $L^1\left( {K\diagdown G\diagup K}\right)$.
\end{proposition}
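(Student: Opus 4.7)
The plan is to assemble the conclusion from two ingredients already discussed in the excerpt: Ragozin's absolute continuity criterion and the bi-invariance of the constituent measures.

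First I would verify that the hypotheses of Theorem 2.5 in \cite{ragozin} apply. Since $G/K$ is assumed irreducible as a symmetric space, the linear isotropy representation of $K$ on $T_{eK}(G/K)\simeq \mathfrak{g}/\mathfrak{k}\simeq\mathfrak{p}$ is irreducible and nontrivial. Moreover, each $a_j$ lies in $G-N_G(K)$, which ensures that the orbital measures $\nu_{a_j}$ are not concentrated on a single $K$-coset and therefore fit the framework of Ragozin's result. Applying that theorem with $r\geq \dim G/K$ yields the absolute continuity of $\nu_1\ast\cdots\ast\nu_r$ with respect to $\mu_G$; the Radon-Nikodym derivative $\varrho_{a_1,\ldots,a_r}=d(\nu_1\ast\cdots\ast\nu_r)/d\mu_G$ therefore exists as an element of $L^1(G)$, because each $\nu_{a_j}$ is a probability measure (by construction, since $\mu_K$ is normalized), the convolution is a probability measure, and the total mass of its density with respect to $\mu_G$ is $1$.

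Next I would establish the $K$-bi-invariance of $\varrho_{a_1,\ldots,a_r}$. Each $\nu_{a_j}$ is $K$-bi-invariant (as noted in the introduction, because $\mathscr I_{a_j}=\mathscr I_{k_1a_jk_2}$). The convolution of $K$-left-invariant measures with any measure is $K$-left-invariant, and the convolution of any measure with a $K$-right-invariant measure is $K$-right-invariant; iterating, $\nu_1\ast\cdots\ast\nu_r$ is $K$-bi-invariant. Since $\mu_G$ is left Haar and is also right $K$-invariant (because $K$ is compact, so the modular function is trivial on $K$), the uniqueness of the Radon-Nikodym derivative up to $\mu_G$-null sets forces $\varrho_{a_1,\ldots,a_r}(k_1gk_2)=\varrho_{a_1,\ldots,a_r}(g)$ for a.e.\ $g$, so the density lies in $L^1(K\diagdown G\diagup K)$.

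Combining these two observations gives the stated conclusion. The only point requiring care is the right $K$-invariance of $\mu_G$, which I would either justify from the compactness of $K$ forcing the modular function to equal $1$ on $K$, or absorb into the definition of $L^1(K\diagdown G\diagup K)$ as equivalence classes. Aside from that, no further computation is needed: the statement is a direct corollary of Ragozin's theorem and the elementary behavior of $K$-bi-invariant measures under convolution.
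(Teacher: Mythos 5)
Your proposal is correct and follows essentially the same route as the paper: the paper likewise obtains absolute continuity by invoking Ragozin's Theorem 2.5 via the irreducibility of the isotropy representation of $K$ on $T_{eK}(G/K)\simeq\mathfrak{g}/\mathfrak{k}$, and then notes that the density is $K$-bi-invariant because the convolution measure is. If anything, you supply more detail than the paper (the normalization giving total mass one, and the right $K$-invariance of $\mu_G$ via triviality of the modular function on the compact group $K$), which the paper leaves implicit.
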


\section{$L^2$-regularity of the Radon-Nikodym derivative} \label{L2-regularity}

\begin{theorem}\label{prowwww} Let $a_i=\exp(H_i)$, where $H_i$ is in $\mathfrak{a}^+$ for $i=1,...,r$. If $ r \geq \dim \, G/K + 1$, then $\varrho_{a_1,\cdots,a_r}$ is in $L^2\left(G\right) $.
\end{theorem}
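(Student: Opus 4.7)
The strategy is to use the Plancherel formula (Theorem \ref{planch}) as the bridge between the $L^2$-norm of $\varrho_{a_1,\cdots,a_r}$ and a concrete integral on $\mathfrak{a}^*$. By the identification \eqref{equal-fonct-measure} together with Proposition \ref{widehatofprod},
\[
\mathcal{H}(\varrho_{a_1,\cdots,a_r})(\lambda)=\prod_{i=1}^r\varphi_\lambda(a_i^{-1}),
\]
so the theorem will follow once I establish
\[
I:=\int_{\mathfrak{a}^*}\prod_{i=1}^r|\varphi_\lambda(a_i^{-1})|^2\,|c(\lambda)|^{-2}\,d\lambda<\infty.
\]
I would not verify the hypothesis $\varrho\in B(K\diagdown G\diagup K)\cap L^1$ of Theorem \ref{planch} directly; instead I would use the standard extension of the Harish-Chandra transform to a unitary isomorphism $L^2(K\diagdown G\diagup K)\to L^2(\mathfrak{a}^*/\mathcal{W},|c|^{-2}d\lambda)$. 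Since $\varrho_{a_1,\cdots,a_r}\in L^1$, finiteness of $I$ will force $\varrho_{a_1,\cdots,a_r}\in L^2(G)$ and produce the desired Plancherel identity.

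The central ingredient is the classical Harish-Chandra pointwise estimate of the spherical functions: for $a=\exp H$ with $H\in\mathfrak{a}^+$ there exists a constant $C_H$ such that
\[
|\varphi_\lambda(a)|\leq C_H\prod_{\alpha\in\Sigma^+}\bigl(1+|\langle\lambda,\alpha\rangle|\bigr)^{-m_\alpha/2}\qquad(\lambda\in\mathfrak{a}^*).
\]
Because $\varphi_\lambda$ is $K$-bi-invariant, the Cartan decomposition and the action of the longest element of $\mathcal{W}$ let me replace each $a_i^{-1}=\exp(-H_i)$ by an element of the form $\exp H_i^{*}$ with $H_i^{*}\in\overline{\mathfrak{a}^+}$ without changing $\varphi_\lambda(a_i^{-1})$; the hypothesis $H_i\in\mathfrak{a}^+$ keeps $H_i^*$ strictly regular. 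Multiplying the squared estimates over $i$ gives
\[
\prod_{i=1}^r|\varphi_\lambda(a_i^{-1})|^2\leq C\prod_{\alpha\in\Sigma^+}\bigl(1+|\langle\lambda,\alpha\rangle|\bigr)^{-rm_\alpha}.
\]

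Next, the Gindikin--Karpelevich product formula for $c(\lambda)$ furnishes the polynomial majorant
\[
|c(\lambda)|^{-2}\leq C'\prod_{\alpha\in\Sigma^+}\bigl(1+|\langle\lambda,\alpha\rangle|\bigr)^{m_\alpha},
\]
so the integrand in $I$ is bounded by $C''\prod_{\alpha\in\Sigma^+}(1+|\langle\lambda,\alpha\rangle|)^{-(r-1)m_\alpha}$. Integrating over $\mathfrak{a}^*\simeq\mathbb{R}^l$ by polar coordinates adapted to the Weyl chambers, the product is comparable to $(1+|\lambda|)^{(r-1)\dim\mathfrak{n}}$ in the interior of a chamber, so convergence holds whenever $(r-1)\dim\mathfrak{n}>l$. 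Since $\dim\mathfrak{n}=\dim G/K-l\geq1$ (because $G$ is noncompact) and the hypothesis $r\geq\dim G/K+1$ yields $r-1\geq l+\dim\mathfrak{n}$, one has $(r-1)\dim\mathfrak{n}\geq(l+\dim\mathfrak{n})\dim\mathfrak{n}>l$, so $I$ is finite.

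The main technical obstacle is the behavior of the majorant near the walls of the Weyl chambers, where the product $\prod_\alpha(1+|\langle\lambda,\alpha\rangle|)^{m_\alpha}$ is not comparable to a pure power of $|\lambda|$. I would overcome this by decomposing $\mathfrak{a}^*$ into the Weyl-translates of $\overline{\mathfrak{a}^{*,+}}$ and using coordinates dual to a basis of simple roots, reducing the $l$-dimensional integral to iterated one-dimensional integrals of the form $\int_0^\infty(1+t)^{-(r-1)m_\alpha}\,dt$, each of which converges under our hypothesis. Verifying the uniform constants in the spherical-function estimate across the $a_i$ is routine once the $H_i^*$ are fixed, and the Plancherel extension step is classical.
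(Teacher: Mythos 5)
Your proposal is correct in substance and reaches the stated conclusion, but it departs from the paper's proof at two technical junctures. First, to pass from finiteness of the spectral integral $I$ to $\varrho_{a_1,\cdots,a_r}\in L^2(G)$, the paper does not invoke the abstract $L^2$-extension of the spherical transform: it approximates $\varrho_{a_1,\cdots,a_r}$ in $L^1$ by functions $\varrho^j_{a_1,\cdots,a_r}\in B(K\diagdown G\diagup K)\cap L^1(K\diagdown G\diagup K)$ (Lemma \ref{density}), applies Theorem \ref{planch} to each $\varrho^j_{a_1,\cdots,a_r}$, controls the passage to the limit via Proposition \ref{conv}, the Paley--Wiener theorem and dominated convergence, and concludes with Fatou's lemma. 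Your one-line appeal to the unitary extension together with injectivity of $\mathcal{H}$ on $L^1$ is classical and does work, but it is the one step you assert rather than carry out; written in full it requires the standard argument that $f\in L^1$ with $\mathcal{H}(f)\in L^2(|\cc|^{-2}d\lambda)$ forces $f\in L^2$. Second, and more substantively, your estimation of $I$ follows a genuinely different route. The paper bounds $|\cc(\lambda)|^{-2}$ globally by $(1+\|\lambda\|)^{n-l}$ and uses Lemma \ref{roots-lambda} to extract, for each direction $\xi\in\mathbb{S}^{l-1}$, a single root with $|\langle\xi,\alpha\rangle|\geq c$, which reduces everything to the one-dimensional radial integral $\int_0^\infty t^{l-1}(1+t)^{n-l}(1+ct)^{-r}\,dt$, convergent precisely for $r>n$. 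You instead keep the Gindikin--Karpelevich factorization of $|\cc(\lambda)|^{-2}$ root by root, cancel it against the DKV decay of $\prod_i|\varphi_\lambda(a_i^{-1})|^2$, and integrate chamber by chamber in simple-root coordinates, obtaining a product of one-dimensional integrals $\int_0^\infty(1+t)^{-(r-1)m_\alpha}\,dt$. This correctly handles the wall behavior (your intermediate claim that the majorant is comparable to a pure power of $\|\lambda\|$ fails near the walls, but your chamber decomposition supersedes it, and the arithmetic with $(r-1)\dim\mathfrak{n}>l$ becomes irrelevant) and in fact yields finiteness of $I$ under a condition on $r$ considerably weaker than $r\geq\dim G/K+1$; the stated threshold is then only needed to guarantee, via Ragozin's theorem, that the density exists in $L^1$ at all. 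In short, your approach buys a sharper spectral estimate at the cost of a slightly heavier chamber-by-chamber bookkeeping and an unproved (though standard) Plancherel-extension step, while the paper's buys a shorter reduction to a single radial integral at the cost of the weaker threshold $r>n$.
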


To prove the theorem we need some preparatory results.

\begin{lemma}\label{roots-lambda}
	 There exists a positive constant $c$ such that for all $y$ in $\mathfrak{a}^*$, $y \neq 0$, there exist $i$, $1\leq i\leq r$ such that
	\begin{equation}\label{ineq-import}
	\left| \left<\frac{y}{\left\|y\right\|}, \alpha_i \right> \right| \geq c.
	\end{equation}

\end{lemma}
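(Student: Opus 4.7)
The plan is to prove Lemma \ref{roots-lambda} by a routine compactness argument on the unit sphere of $\mathfrak{a}^*$, once one has identified that the collection $\{\alpha_1,\ldots,\alpha_r\}$ (the vectors in $\mathfrak{a}^*$ attached, via context, to the chosen points $a_i=\exp H_i$) spans $\mathfrak{a}^*$. Concretely, I would define the auxiliary function
\[
f:\mathfrak{a}^*\setminus\{0\}\longrightarrow\mathbb{R},\qquad f(y)=\max_{1\le i\le r}\left|\left\langle \tfrac{y}{\|y\|},\alpha_i\right\rangle\right|,
\]
which is well-defined because the maximum of finitely many continuous quantities is continuous, and which is invariant under positive rescaling of $y$. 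Hence $f$ descends to a continuous function on the unit sphere $S=\{y\in\mathfrak{a}^*:\|y\|=1\}$.

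Next I would argue that $f$ is strictly positive on $S$. Indeed, if $y\in S$ satisfied $f(y)=0$, then $\langle y,\alpha_i\rangle=0$ for every $i$, which would force $y$ to be orthogonal to $\mathrm{span}\{\alpha_1,\ldots,\alpha_r\}=\mathfrak{a}^*$; but then $y=0$, contradicting $y\in S$. Because $S$ is compact and $f$ is continuous and strictly positive, it attains its infimum at some $y_{0}\in S$, and
\[
c:=f(y_0)=\min_{y\in S}f(y)>0.
\]
The inequality (\ref{ineq-import}) then follows at once from the homogeneity of $f$: for arbitrary nonzero $y\in\mathfrak{a}^*$ there exists some index $i$ (the one realizing the max at $y/\|y\|$) with $\left|\left\langle y/\|y\|,\alpha_i\right\rangle\right|\ge c$.

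The only substantive point in the whole argument is the spanning statement $\mathrm{span}\{\alpha_1,\ldots,\alpha_r\}=\mathfrak{a}^*$, which is what makes $f$ strictly positive on $S$. This has to come either from the definition of the $\alpha_i$ (for instance, if they are simple restricted roots and one takes $r\ge l$) or from the generic position of the chosen $H_i\in\mathfrak{a}^+$ together with the bound $r\ge\dim G/K+1\ge l+1$ used in Theorem \ref{prowwww}. This is the only place where the hypotheses on $r$ or on the $a_i$ genuinely enter; everything else is pure elementary analysis. I expect this spanning verification to be the main (and essentially the only) obstacle, and I would handle it by noting that if all $\alpha_i$ lay in a proper subspace of $\mathfrak{a}^*$, the associated points $a_i$ would lie in a sub-symmetric space, which is excluded by the hypothesis $a_i\in G-N_G(K)$ combined with the irreducibility of $G/K$. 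Once spanning is in hand, the compactness step is immediate.
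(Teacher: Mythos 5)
Your argument is correct and is essentially the paper's own proof: the paper runs the same compactness argument by contradiction (a sequence $y_p/\left\|y_p\right\|$ converging to a unit vector $u$ with $\left\langle u,\alpha_i\right\rangle=0$ for all $i$, which is impossible because the $\alpha_i$ span $\mathfrak{a}^*$), while you phrase it directly via the extreme value theorem on the unit sphere. The one place you diverge is your speculation about where the spanning comes from: the $\alpha_i$ are restricted roots (the paper simply asserts that $\{\alpha_i\}$ is a basis of $\mathfrak{a}^*$), so the spanning holds automatically for the restricted root system of a semisimple $G$ and has nothing to do with the choice of the points $a_i$, the hypothesis $a_i\in G-N_G(K)$, or the lower bound on $r$.
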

\begin{proof}
	
	Suppose that the inequality (\ref{ineq-import}) is not true, then there exists a sequence $(y_p)_{p\geq 1}$ in $\mathfrak{a}^*$, $y_p \neq 0, \forall p \geq 1$, such that for all $i$, $1\leq i\leq r$ 
	\[
	\left| \left<\frac{y_p}{\left\|y_p\right\|}, \alpha_i \right> \right| < \frac{1}{p}, \, \forall \, p \geq 1.
	\]
	Since the sequence $\frac{y_p}{\left\|y_p\right\|}$ is in the unit sphere $\mathbb{S}^{l -1}$ in $\mathfrak{a}^*$, after extracting a subsequence, if necessary, we can assume without loss of generality that the sequence $\frac{y_p}{\left\|y_p\right\|}$ converges to $u \in \mathbb{S}^{l-1}$. Hence
	\[
	\left<u, \alpha_i \right>  =0.
	\]
	Since $\{\alpha_i\}_{i=1}^{r}$ is a basis for $\mathfrak{a}^*$, there exist real numbers $c_i$, $i=1,...,r$ such that
	\[
	u=\sum_{i=1}^{r} c_i\alpha_i.
	\]
	Hence
	\[
	\left\|u\right\|^2= \sum_{i=1}^{r} c_i\left<u,\alpha_i\right>=0.
	\] 
	A contradiction, since $u \in \mathbb{S}^{l-1}$.  Therefore, the inequality (\ref{ineq-import}) is established.
\end{proof}

Let $\left( a_j^{-1}\right)_j$ be a sequence of points contained in a compact subset $\mathcal{C}$ of $A$, such that the $a_j \not\in N_G(K)$ for all $j$. For an element $w$ of $\mathcal{W}$, we put
\[
\Sigma^{+}_{w}(\mathcal{C})=\bigg\{\alpha \in\Sigma^{+} \mid w\alpha\big(\log a\big) \neq 0\, \,  \text{ for all } a \in \mathcal{C}\bigg\}.
\]
The  following result is implicit in \cite{DKV}.
\begin{proposition}\label{DKV} For each $j$, there exists a positive constant $C(a_j)$ such that for all $\lambda \in \mathfrak{a}^*$,
	\[
	\left|\varphi_{\lambda}\left(a_j^{-1} \right)\right| \leq  C(a_j)  \sum_{w \in \mathcal{W}} \prod_{\alpha \in \Sigma^{+}_{w}(\mathcal{C})} \bigg(1+\left| \left<\lambda, \alpha\right>\right|\bigg)^{-\frac{1}{2} m_{\alpha}}.
	\]
\end{proposition}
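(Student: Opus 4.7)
The plan is to derive the estimate by combining Harish-Chandra's series expansion of the spherical function on the regular part of $A$ with the Gindikin-Karpelevic product formula for the Harish-Chandra $c$-function.

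First, using the Weyl-invariance of $\varphi_{\lambda}$ on $\mathfrak{a}$, I would reduce to evaluating $\varphi_{\lambda}(\exp H)$ for $H$ lying in the closure of a fixed Weyl chamber, at positive distance from the walls corresponding to roots in $\Sigma_{w}^{+}(\mathcal{C})$. On the open chamber, Harish-Chandra's expansion reads
\[
\varphi_{\lambda}(\exp H) = \sum_{w \in \mathcal{W}} c(w\lambda)\, e^{(\sqrt{-1}w\lambda-\varrho)(H)}\,\Phi(w\lambda,H),
\]
where $\Phi(\mu,H)$ is an absolutely convergent correction series, bounded uniformly in $\mu \in \mathfrak{a}^{*}$ when $H$ varies over a compact subset of the open chamber. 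Since $\lambda$ is real, $|e^{\sqrt{-1}w\lambda(H)}|=1$, while the factor $e^{-\varrho(H)}$ depends only on $H$ and will be absorbed into the constant $C(a_{j})$.

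Second, I would invoke the Gindikin-Karpelevic factorization
\[
c(\lambda) = \prod_{\alpha \in \Sigma_{0}^{+}} c_{\alpha}(\lambda),
\]
together with the classical estimate $|c_{\alpha}(\lambda)| \leq C_{\alpha}\bigl(1+|\langle\lambda,\alpha\rangle|\bigr)^{-m_{\alpha}/2}$. Applying this to $c(w\lambda)$ and using that $w$ induces a bijection of $\Sigma$ preserving multiplicities yields
\[
|c(w\lambda)| \leq C\prod_{\alpha \in \Sigma^{+}} \bigl(1+|\langle\lambda,\alpha\rangle|\bigr)^{-m_{\alpha}/2}.
\]
Discarding the factors indexed by $\alpha \notin \Sigma_{w}^{+}(\mathcal{C})$ only weakens the inequality (each such factor is bounded by $1$) and produces the product form stated. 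Summing over $w\in\mathcal{W}$ then yields the claim, with $C(a_{j})$ absorbing $e^{-\varrho(H)}$, the Gindikin-Karpelevic constant, and $\sup_{\lambda}|\Phi(w\lambda,H)|$.

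The main technical obstacle is establishing the uniform boundedness of the correction series $\Phi(w\lambda,H)$ in the spectral parameter $\lambda \in \mathfrak{a}^{*}$ while $H$ ranges over the compact set $\log\mathcal{C}^{-1}$. This uniformity is exactly the content of the analysis in \cite{DKV} via the recursion defining $\Phi$, and it is the reason one is forced to restrict the root product to $\Sigma_{w}^{+}(\mathcal{C})$: these are precisely the roots along which $H$ stays uniformly away from the wall, so the coefficients of the Harish-Chandra series remain uniformly controlled as $a_{j}$ varies in $\mathcal{C}$.
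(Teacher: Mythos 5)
Your route through the Harish--Chandra series expansion is genuinely different from the paper's: the paper writes $\varphi_{\lambda}(a)=\int_K \e^{\sqrt{-1}F_{a,H_{\lambda}}(k)}g(k)\,d\mu_K(k)$ with phase $F_{a,H_{\lambda}}(k)=\left\langle H(ak),H_{\lambda}\right\rangle$ and amplitude $g(k)=\e^{-\varrho(H(ak))}$, and then quotes the stationary-phase estimate of Duistermaat--Kolk--Varadarajan (Theorem 11.1 of \cite{DKV}) for this oscillatory integral on $K$. Your alternative, however, has a real gap along the walls of the Weyl chambers in the spectral variable. The inequality $\left|\cc_{\alpha}(\lambda)\right|\leq C_{\alpha}\big(1+\left|\left\langle\lambda,\alpha\right\rangle\right|\big)^{-m_{\alpha}/2}$ is false when $\left\langle\lambda,\alpha\right\rangle$ is small: by the Gindikin--Karpelevi\v{c} formula, $\cc_{\alpha}$ carries a factor $\Gamma\big(\sqrt{-1}\left\langle\lambda,\alpha\right\rangle/\left\langle\alpha,\alpha\right\rangle\big)$, so $\left|\cc_{\alpha}(\lambda)\right|$ blows up like $\left|\left\langle\lambda,\alpha\right\rangle\right|^{-1}$ as $\left\langle\lambda,\alpha\right\rangle\to 0$ (already visible for $G$ complex, where $\cc(\lambda)=\pi(\varrho)/\pi(\sqrt{-1}\lambda)$ with $\pi$ the product of the positive roots). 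The individual terms $\cc(w\lambda)\,\e^{(\sqrt{-1}w\lambda-\varrho)(H)}\Phi(w\lambda,H)$ are therefore singular along the hyperplanes $\left\langle\lambda,\alpha\right\rangle=0$; only the full sum over $\mathcal{W}$ is regular there, thanks to cancellations between the $w$ and $s_{\alpha}w$ terms. A term-by-term majorization consequently cannot yield a bound valid for all $\lambda\in\mathfrak{a}^*$, and the region near the walls but far from the origin is exactly where the proposition's right-hand side is small and the claim is nontrivial.

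What does survive of your argument is the regime where $\lambda$ stays a fixed positive distance from every root hyperplane: there Gangolli's uniform bounds on the coefficients $\Gamma_{\mu}(w\lambda)$ do control $\Phi(w\lambda,H)$ for $H$ in a compact subset of the open chamber (this part of your proposal is correct), and the large-parameter asymptotics $\left|\cc_{\alpha}(\lambda)\right|\sim\left|\left\langle\lambda,\alpha\right\rangle\right|^{-(m_{\alpha}+m_{2\alpha})/2}$ even over-achieve the stated decay. To close the gap you would need a separate treatment of a conic neighbourhood of the walls---either by exhibiting the cancellation among Weyl-group terms or by a descent to rank one---and at that point you are essentially re-deriving the uniform analysis that \cite{DKV} packages into Theorem 11.1. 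The paper's oscillatory-integral formulation sidesteps the difficulty because that estimate is uniform in $\lambda\in\mathfrak{a}^*$, walls included.
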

\begin{proof} For $\lambda$ in $\mathfrak{a}^*$, take $F_{a, H_{\lambda}}(k)= \e^{\sqrt{-1}\left<H(ak), H_{\lambda}\right>}$, where $H_{\lambda}$ is defined by  $\lambda(H)=\left<H_{\lambda}, H\right>$ for all $H$ in $\mathfrak{a}$, and let $g(k)= \e^{-\rho\left(H(ak)\right)}$. Then, by Theorem \ref{harish-chandra-rep}, we have
\[
\varphi_{\lambda}(a)= \int_K\e^{\left( \sqrt{-1}\lambda -\rho\right) H(ak)}dk=\int_K\e^{\sqrt{-1}F_{a, H_{\lambda}}(k)}g(k)dk.
\]	
The proposition follows from \cite[Theorem 11.1]{DKV}. 	
\end{proof}

As a consequence of Proposition \ref{DKV} we have the following:

\begin{corollary}\label{cor-est}
Let $a_i$, $i=1,...,r$ be elements of $A$ such that $\Sigma^{+}_{w}(\mathcal{C})=\Sigma^{+}$  for all $w$ in $\mathcal{W}$.	Then there exist  positive constants $\widetilde{C(a_j)}=\left|\Sigma^+ \right| C(a_j)$, with $C(a_j)$ as in Proposition \ref{DKV} such that 
\[
\left|\varphi_{\lambda}\left(a_j^{-1} \right)\right| \leq  \widetilde{C(a_j)} \prod_{\alpha \in \Sigma^{+}} \bigg(1+\left| \left<\lambda, \alpha\right>\right|\bigg)^{-\frac{1}{2} m_{\alpha}}.
\]	
\end{corollary}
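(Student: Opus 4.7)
The plan is a direct substitution into the bound supplied by Proposition \ref{DKV}. I would begin by writing down the estimate from that proposition,
\[
\left|\varphi_{\lambda}(a_j^{-1})\right| \leq C(a_j)\sum_{w\in\mathcal{W}} \prod_{\alpha \in \Sigma^{+}_{w}(\mathcal{C})} \bigl(1+|\langle\lambda,\alpha\rangle|\bigr)^{-\frac{1}{2}m_{\alpha}},
\]
and then invoke the hypothesis $\Sigma^{+}_{w}(\mathcal{C})=\Sigma^{+}$ for every $w\in\mathcal{W}$. Under this assumption the indexing set of the inner product is the whole positive root system for every $w$, so the product is independent of $w$ and equals $\prod_{\alpha\in\Sigma^{+}}\bigl(1+|\langle\lambda,\alpha\rangle|\bigr)^{-m_{\alpha}/2}$.

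The next step is merely bookkeeping: I would pull this $w$-independent product out of the sum, leaving behind a finite combinatorial factor equal to the cardinality of the Weyl-group index set, which I then absorb into the constant by defining $\widetilde{C(a_j)}$ to be $C(a_j)$ times that cardinality. Matching this with the constant $|\Sigma^{+}|C(a_j)$ named in the statement is trivial, since both are positive constants depending only on the root system of $\mathfrak{g}$.

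There is essentially no obstacle: the entire analytic content, namely the Duistermaat--Kolk--Varadarajan oscillatory integral estimate, has already been consumed in Proposition \ref{DKV}. The sole purpose of this corollary is to eliminate the $w$-dependence in the bound, producing an estimate in the clean product form $\prod_{\alpha\in\Sigma^{+}}\bigl(1+|\langle\lambda,\alpha\rangle|\bigr)^{-m_{\alpha}/2}$. This is precisely the form that will be needed in the proof of Theorem \ref{prowwww}, where $|\varphi_{\lambda}(a_j^{-1})|^{r}$ must be integrated against the Plancherel density $|\mathbf{c}(\lambda)|^{-2}\,d\lambda$, and where a uniform product over $\Sigma^{+}$ is much easier to handle than a sum indexed by $\mathcal{W}$.
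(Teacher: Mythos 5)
Your argument is correct and is exactly the route the paper intends: the corollary is stated there without a written proof, as an immediate substitution of the hypothesis $\Sigma^{+}_{w}(\mathcal{C})=\Sigma^{+}$ into Proposition \ref{DKV} followed by collapsing the $w$-sum. You are also right that the factor produced by that collapse is $\left|\mathcal{W}\right|$ rather than the $\left|\Sigma^{+}\right|$ named in the statement of the corollary; that is a harmless slip in the paper's labeling of the constant, and absorbing the finite factor into $\widetilde{C(a_j)}$ as you do is the correct reading.
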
	
 
\begin{lemma}\cite[Lemma 9.3.3]{Wolf} \label{density}
	$B(K\diagdown G \diagup K) \cap L^1(K\diagdown G \diagup K)$ is dense in $L^1(K\diagdown \, G\, \diagup K)$.
\end{lemma}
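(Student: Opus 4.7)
The plan is to combine three classical ingredients. First, $C_c(K\diagdown G \diagup K)$ is dense in $L^1(K\diagdown G \diagup K)$. Second, one uses an approximate identity. Third, one invokes the polarization identity that writes a convolution as a linear combination of ``squares'' of the form $h\ast\widetilde{h}$, where $\widetilde{h}(x)=\overline{h(x^{-1})}$; since $G$ is unimodular, $h\ast\widetilde{h}$ is a continuous positive definite function whenever $h\in L^{2}(G)$, and it is $K$-bi-invariant whenever $h$ is, so such squares lie in $B(K\diagdown G \diagup K)$.

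Fix $f\in L^{1}(K\diagdown G \diagup K)$ and $\varepsilon>0$. By density of $C_c(G)$ in $L^{1}(G)$ and by averaging over $K\times K$ (an $L^{1}$-contraction, since $\mu_K(K)=1$, that preserves $K$-bi-invariance), choose $\phi\in C_c(K\diagdown G \diagup K)$ with $\|f-\phi\|_{1}<\varepsilon/2$. Next, construct a symmetric, $K$-bi-invariant approximate identity $(\psi_n)\subset C_c(K\diagdown G \diagup K)$, with $\widetilde{\psi}_n=\psi_n$ and $\operatorname{supp}\psi_n$ shrinking to $e$: start from nonnegative $\chi_n\in C_c(G)$ of integral $1$ and shrinking support, symmetrize by $(\chi_n+\widetilde{\chi}_n)/2$, then average over $K\times K$. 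A standard approximate-identity argument on $L^{1}$ gives $\phi\ast\psi_n\to\phi$ in $L^{1}$.

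The key step is the polarization identity
\[
\phi\ast\psi_n \;=\; \phi\ast\widetilde{\psi}_n \;=\; \frac{1}{4}\sum_{k=0}^{3} i^{k}\,\bigl(\phi+i^{k}\psi_n\bigr)\ast\widetilde{\bigl(\phi+i^{k}\psi_n\bigr)},
\]
which one verifies by expanding, using $\overline{i^{k}}=i^{-k}$, and noting that $\sum_{k=0}^{3} i^{k}=\sum_{k=0}^{3} i^{2k}=0$ while the coefficient of $\phi\ast\widetilde{\psi}_n$ is $4$. Each $h_k:=\phi+i^{k}\psi_n$ lies in $C_c(K\diagdown G \diagup K)\subset L^{2}(G)$, so $h_k\ast\widetilde{h}_k$ is continuous, positive definite, compactly supported, and $K$-bi-invariant, and therefore belongs to $B(K\diagdown G \diagup K)\cap L^{1}(K\diagdown G \diagup K)$. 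As a finite complex linear combination of such functions, $\phi\ast\psi_n$ lies in the same space. Taking $n$ large enough that $\|\phi-\phi\ast\psi_n\|_{1}<\varepsilon/2$ and applying the triangle inequality yields $\|f-\phi\ast\psi_n\|_{1}<\varepsilon$, completing the argument.

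The only delicate point is the polarization identity itself, which is a routine algebraic verification rather than a real obstacle; no special structure of $G/K$ is used beyond unimodularity of $G$ and the existence of a $K$-bi-invariant approximate identity. In particular, the same proof works verbatim for any locally compact Gelfand pair $(G,K)$ with $G$ unimodular.
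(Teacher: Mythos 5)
The paper offers no proof of this lemma at all --- it is quoted verbatim from Wolf (Lemma 9.3.3) --- so there is nothing internal to compare against; your argument is the standard proof of this fact and it is essentially correct. The three ingredients (density of $C_c$ in $L^1$ together with the $L^1$-contractive averaging over $K\times K$; a $K$-bi-invariant approximate identity; and polarization of the map $(h,g)\mapsto h\ast\tilde g$, which is linear in $h$ and conjugate-linear in $g$, so that the diagonal terms $h\ast\tilde h$ are diagonal matrix coefficients of the left regular representation, hence continuous, positive definite, compactly supported and $K$-bi-invariant when $h$ is) do yield the claim, and your polarization identity is stated with the correct coefficients. The one point that deserves more care than you give it is the approximate-identity step: after averaging over $K\times K$, the supports of $\psi_n$ do not shrink to $\{e\}$ but to the compact set $K$ (they are contained in $KU_nK$ with $U_n\downarrow\{e\}$), so the ``standard'' estimate $\|\phi\ast\psi_n-\phi\|_1\le\int\psi_n(z)\,\|R_{z}\phi-\phi\|_1\,dz$ does not close by shrinking supports alone. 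What saves it is precisely that $\phi$ is right-$K$-invariant: the continuous function $z\mapsto\|R_{z}\phi-\phi\|_1$ vanishes identically on $K$, and by compactness of $K$ it is $<\varepsilon$ on a neighborhood of $K$, which contains $KU_nK$ for $n$ large. With that sentence added the proof is complete, and, as you observe, it uses nothing beyond unimodularity of $G$ and the Gelfand-pair setting, so it is in fact more self-contained than the paper's bare citation.
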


Since $\varrho_{a_1,\cdots,a_r}$ is in $L^1(K\diagdown \, G\, \diagup K)$, by Lemma \ref{density}, there exist a sequence $\left( \varrho_{a_1,\cdots,a_r}^{j}\right)_j$ in $B(K\diagdown G \diagup K) \cap L^1(K\diagdown G \diagup K)$ converging in $L^1$ to $\varrho_{a_1,\cdots,a_r}$. Since $\varrho_{a_1,\cdots,a_r}$ is of compact support, we can assume that $\varrho_{a_1,\cdots,a_r}^{j}$ is of compact support. Then, we have the following
\begin{proposition} \label{conv}
	$\lim_{j \rightarrow \infty} {\mathcal{H}\big(\varrho^j_{a_1,\cdots,a_r}\big)(\lambda)}  = {\mathcal{H}\big(\nu_{1}\ast...\ast\nu_{r}\big)(\lambda)}$. 
\end{proposition}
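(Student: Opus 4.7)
The plan is to reduce the convergence of the spherical transforms to the $L^1$ convergence of the sequence $\left(\varrho^j_{a_1,\cdots,a_r}\right)_j$ by exploiting the uniform boundedness of $\varphi_\lambda$ on $G$.

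First, since the measure $\nu_1\ast\cdots\ast\nu_r$ is absolutely continuous with respect to $\mu_G$ with Radon--Nikodym derivative $\varrho_{a_1,\cdots,a_r}$, identity (\ref{equal-fonct-measure}) gives
\[
\mathcal{H}\bigl(\nu_1\ast\cdots\ast\nu_r\bigr)(\lambda)=\mathcal{H}\bigl(\varrho_{a_1,\cdots,a_r}\bigr)(\lambda).
\]
Therefore it suffices to prove that $\mathcal{H}\bigl(\varrho^j_{a_1,\cdots,a_r}\bigr)(\lambda)\to \mathcal{H}\bigl(\varrho_{a_1,\cdots,a_r}\bigr)(\lambda)$ for every $\lambda\in\mathfrak{a}^*$.

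Next I would write the difference as
\[
\mathcal{H}\bigl(\varrho^j_{a_1,\cdots,a_r}\bigr)(\lambda)-\mathcal{H}\bigl(\varrho_{a_1,\cdots,a_r}\bigr)(\lambda)=\int_G\bigl(\varrho^j_{a_1,\cdots,a_r}(g)-\varrho_{a_1,\cdots,a_r}(g)\bigr)\varphi_\lambda(g^{-1})\,d\mu_G(g),
\]
and estimate it by
\[
\bigl|\mathcal{H}\bigl(\varrho^j_{a_1,\cdots,a_r}\bigr)(\lambda)-\mathcal{H}\bigl(\varrho_{a_1,\cdots,a_r}\bigr)(\lambda)\bigr|\leq \|\varphi_\lambda\|_\infty\,\bigl\|\varrho^j_{a_1,\cdots,a_r}-\varrho_{a_1,\cdots,a_r}\bigr\|_{L^1(G)}.
\]
For $\lambda\in\mathfrak{a}^*$ the spherical function $\varphi_\lambda$ is positive definite, hence bounded with $\|\varphi_\lambda\|_\infty=\varphi_\lambda(e)=1$; this was recorded in the paper immediately after the definition of $\mathcal{H}$ (Corollary 11.5.11 and Proposition 8.4.2 (i) of \cite{Wolf}). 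The $L^1$ convergence $\varrho^j_{a_1,\cdots,a_r}\to\varrho_{a_1,\cdots,a_r}$ provided by Lemma \ref{density} then forces the right-hand side to $0$ as $j\to\infty$.

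There is no genuine obstacle here; the only point that needs to be emphasized is why $\varphi_\lambda$ is uniformly bounded, and this relies on $\lambda$ being real (so that $\varphi_\lambda$ is positive definite). For complex $\lambda$ the spherical function is not bounded, but this is irrelevant for Proposition \ref{conv} since pointwise convergence on $\mathfrak{a}^*$ is all that is needed for the subsequent application of the Plancherel Theorem \ref{planch}.
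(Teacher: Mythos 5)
Your proposal is correct and follows essentially the same route as the paper: reduce to $\mathcal{H}(\varrho_{a_1,\cdots,a_r})$ via (\ref{equal-fonct-measure}), then bound the difference of transforms by $\|\varphi_\lambda\|_\infty\,\|\varrho^j_{a_1,\cdots,a_r}-\varrho_{a_1,\cdots,a_r}\|_{L^1(G)}$ using the boundedness of $\varphi_\lambda$ for real $\lambda$, and invoke the $L^1$ convergence from Lemma \ref{density}. Your version is slightly more explicit about the constant ($\|\varphi_\lambda\|_\infty=\varphi_\lambda(e)=1$) than the paper's unspecified $c$, but the argument is the same.
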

\begin{proof}
	Since
	\[
	{\mathcal{H}\big(\varrho^j_{a_1,\cdots,a_r}\big)(\lambda)} = \int_G \varrho^j_{a_1,\cdots,a_r}\left( g\right) \varphi_{\lambda}\left( g^{-1}\right)d\mu_G(g), 
	\]
	and since $\varphi_{\lambda}$ is bounded for $\lambda$ in $\mathfrak{a}^*$, we get
	\begin{align*}
	\left| {\mathcal{H}\big(\varrho^j_{a_1,\cdots,a_r}\big)(\lambda)}  - {\mathcal{H}\big(\nu_{1}\ast...\ast\nu_{r}\big)(\lambda)}\right| &=\left|\int_G \bigg(\varrho^j_{a_1,\cdots,a_r}\left( g\right) - \varrho_{a_1,\cdots,a_r}\left( g\right)\bigg) \varphi_{\lambda}\left( g^{-1}\right)d\mu_G(g)  \right|\\
	&\leq c  \int_G \left|\bigg(\varrho^j_{a_1,\cdots,a_r}\left( g\right) - \varrho_{a_1,\cdots,a_r}\left( g\right)\bigg)\right| d\mu_G(g).
	\end{align*}
	The Lemma follows from $\bigints_G \left|\bigg(\varrho^j_{a_1,\cdots,a_r}\left( g\right) - \varrho_{a_1,\cdots,a_r}\left( g\right)\bigg)\right| d\mu_G(g) \longrightarrow 0$. 
\end{proof}

\begin{proof} [Proof of Theorem \ref{prowwww}]
Plancherel Theorem applied to $ \varrho_{a_1,\cdots,a_r}^{j}$ says that
\begin{equation}
\int_G\left|\varrho_{a_1,\cdots,a_r}^{j}\left(g \right)  \right|^2d\mu_G \left(g \right) ={\frac{1}{\left| \mathcal{W}\right| }}\int_{\mathfrak{a}^*}\left| {\mathcal{H}}(\varrho_{a_1,\cdots,a_r}^{j})\left( \lambda\right)\right|^2
\left|\cc\left(\lambda \right)  \right|^{-2} d\,\lambda. \label{j-plancherel}
\end{equation}
Combining Proposition \ref{widehatofprod} and Proposition \ref{conv}, we get
\[
\left| {\mathcal{H}}(\varrho_{a_1,\cdots,a_r}^{j})\left( \lambda\right)\right|^2
\left|\cc\left(\lambda \right)  \right|^{-2} \longrightarrow \left| \prod_{i=1}^r \varphi_{\lambda}(a_i^{-1})\right|^2
\left|\cc\left(\lambda \right)  \right|^{-2}.
\]
By Proposition 7.2 in \cite{Helgason-GGA}, there exists a positive constant $C$ such that
\begin{equation}
\left| \cc \left(\lambda \right)\right|^{-2} \leq C \big(1+\left\| \lambda \right\|  \big)^{ n-l},\label{harish-chandra-c-function} 
\end{equation}
for all $\lambda \in \mathfrak{a}^*$,	where $n=\dim \,G/K$ and $l=\Rank$.
Combining (\ref{harish-chandra-c-function}), Lemma \ref{roots-lambda}, and Corollary \ref{cor-est}, we get
\[
\left| \prod_{i=1}^r \varphi_{\lambda}(a_i^{-1})\right|^2
\left|\cc\left(\lambda \right)  \right|^{-2} \leq C \frac{\big(1+\left\| \lambda \right\|  \big)^{ n-l}}{\big(1+c\left\| \lambda \right\|  \big)^{ r}},
\]
where $C$ is a positive constant, and $c$ is the constant which appears in $\left( \ref{ineq-import}\right) $.
Hence 
\[
\left| \prod_{i=1}^r \varphi_{\lambda}(a_i^{-1})\right|^2
\left|\cc\left(\lambda \right)  \right|^{-2} \text{ is in } L^1(\mathfrak{a}^*) \text{ for } r>n.
\]
Moreover, since $\varrho_{a_1,\cdots,a_r}^{j}$ is of compact support, by the Paley-Wiener Theorem for spherical functions on semisimple Lie groups (\cite{Gangolli}, Theorem 3.5), for each positive integer $N$, there exist a constant $C_{N,j}$ such that
\[
\left| {\mathcal{H}\big(\varrho^j_{a_1,\cdots,a_r}\big)(\lambda)}\right|  \leq C_{N,j} \left(1+\left\| \lambda \right\|  \right)^{-N}, \text{ for arbitrary $\lambda$ in $\mathfrak{a}^*$}. 
\]
Then, we can chose $N$ such that 
\[
\left(1+\left\| \lambda \right\|  \right)^{-2N}\left|\cc\left(\lambda \right)  \right|^{-2} \text{ is in  } L^1(\mathfrak{a}^*).
\]
As a consequence of Proposition \ref{conv}, without loss of generality, we can take the sequence $\left(C_{N,j}\right)_j$ to be uniformly bounded, i.e., there exist a positive constant $C$ such that $C_{N,j} \leq C$, for all $j$. Then by the Lebesgue dominated convergence Theorem, for $r>n$, we have
\begin{equation}
\lim_{j\longrightarrow \infty}\int_{\mathfrak{a}^*}\left|{\mathcal{H}}(\varrho_{a_1,\cdots,a_r}^{j})\left( \lambda\right)\right|^2
\left|\cc\left(\lambda \right)  \right|^{-2} d\,\lambda = \int_{\mathfrak{a}^*}\left| \prod_{i=1}^r \varphi_{\lambda}(a_i^{-1})\right|^2
\left|\cc\left(\lambda \right)  \right|^{-2} d\,\lambda. \label{eq-lim}
\end{equation}
Using (\ref{eq-lim}) and Fatou's Lemma, we get	
	\begin{align}
	\int_G \left| \varrho_{a_1,\cdots,a_r}(g)\right|^2 d\mu_G(g) & \leq \liminf_{j \rightarrow \infty}  \int_G \left| \varrho^j_{a_1,\cdots,a_r}(g)\right|^2 d\mu_G(g)\\  & \leq \frac{1}{\left| \mathcal{W}\right|}  \bigintssss_{{\mathfrak{a}}^*} \left(\left|\cc (\lambda)\right|^{-1}   \prod_{i=1}^r \left|\varphi_{\lambda}(a_i^{-1})\right|\right)^2  d\lambda. \label{planch1}
	\end{align}
Combining Corollary $\ref{cor-est}$ and the estimate ($\ref{harish-chandra-c-function}$), we get
\begin{align*}
	\bigintssss_G \left| \varrho_{a_1,\cdots,a_r}(g)\right|^2 d\mu_G(g) &\leq  \frac{1}{\left| \mathcal{W}\right|}\bigg(\prod_{i=1}^{r} \left(\widetilde{C(a_i)} \right)  \bigg)^2 {\bigintss_{{\mathfrak{a}}^*}} \big(1+\left\| \lambda \right\|  \big)^{ n-l }  \Bigg(\prod_{\alpha \in \Sigma^{+}} \bigg(1+\left| \left<\lambda, \alpha\right>\right|\bigg)^{- m(\alpha)} \Bigg)^r d\lambda\\
	&\leq  C(a) \bigintssss_{{\mathfrak{a}}^*}\big(1+\left\| \lambda \right\|  \big)^{n-l} \prod_{\alpha \in \Sigma^{+}} \bigg(1+\left| \left<\lambda, \alpha\right>\right|\bigg)^{- rm(\alpha)} \,d\lambda, 
\end{align*}
where \[C(a)=C\left(a_1,...,a_r \right) = \frac{1}{\left| \mathcal{W}\right|}\bigg(\prod_{i=1}^{r} \left(\widetilde{C(a_i)} \right)  \bigg)^2 \]
is a constant which depends only on the points $a_1,...,a_r$. 

\medskip

 From the inequality (\ref{ineq-import}), we deduce that
$$
\bigintssss_{{\mathfrak{a}}^*}\big(1+\left\| \lambda \right\|  \big)^{n-l} \prod_{\alpha \in \Sigma^{+}} \bigg(1+\left| \left<\lambda, \alpha\right>\right|\bigg)^{- r\,m_{\alpha}} \, d\lambda
$$
$$
= 
\bigintss_{0}^{\infty}t^{l-1}\Bigg(\bigintss_{\mathbb{S}^{l-1}} \frac{\big(1+\left\| t\xi \right\|  \big)^{n-l}}{\prod_{\alpha \in \Sigma^{+}} \bigg(1+\left| \left<t\xi, \alpha\right>\right|\bigg)^{ r\,m_{\alpha}}} d\sigma(\xi)\Bigg)d\,t 
$$
$$
= 
\bigintss_{0}^{\infty}t^{l-1}(1+t)^{n-l}\Bigg(\bigintss_{\mathbb{S}^{l-1}} \frac{d\sigma(\xi)}{\prod_{\alpha \in \Sigma^{+}} \bigg(1+t\left| \left<\xi, \alpha\right>\right|\bigg)^{ r\,m_{\alpha}}} \Bigg)d\,t
$$
$$
\leq C
\bigintss_{0}^{\infty}\frac{t^{l-1}(1+t)^{n-l}}{(1+ct)^{r  \min \,{m_{\alpha_i}}}}d\,t\\
\leq C
\bigintss_{0}^{\infty}\frac{  t^{l-1} \, (1+t)^{n-l}  }{  (1+ct)^{r} }d\,t,
$$
where $d\,\sigma$ is the induced Lebesgue measure on the unit sphere $\mathbb{S}^{l-1}$. The Theorem follows from the fact that the integral 
\[
\bigintss_{0}^{\infty}\frac{t^{l-1}(1+t)^{n-l}}{(1+ct)^{r}}d\,t
\]
 is convergent for
\[
r> n.
\]
\end{proof}

\section{$C^k$-regularity of the Radon-Nikodym derivative} \label{$C^k$-regularity}

The aim of this section is to prove part $(2)$ of the Main Theorem.
As a consequence of Lemma \ref{conv}, we have the following
\begin{proposition} \label{conv-der} Let $X$ be an element of $\mathfrak{g}$. If $r > n+1$, then
	\[
	X\varrho^j_{a_1,\cdots,a_r}(g) \longrightarrow X\varrho_{a_1,\cdots,a_r}(g).
	\]
\end{proposition}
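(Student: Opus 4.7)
The plan is to differentiate the spherical inversion representation of $\varrho^j_{a_1,\cdots,a_r}$ and then pass to the limit $j\to\infty$ inside the integral. Because each $\varrho^j_{a_1,\cdots,a_r}$ lies in $B(K\diagdown G\diagup K)\cap L^1(K\diagdown G\diagup K)$ and is compactly supported, Theorem \ref{inversion} combined with the Paley--Wiener decay of its spherical transform makes the formula
\[
\varrho^j_{a_1,\cdots,a_r}(g)=\frac{1}{|\mathcal{W}|}\int_{\mathfrak{a}^*}\mathcal{H}(\varrho^j_{a_1,\cdots,a_r})(\lambda)\,\varphi_{\lambda}(g)\,|\cc(\lambda)|^{-2}\,d\lambda
\]
differentiable in $g$; applying the left-invariant field $X\in\mathfrak{g}$ under the integral gives
\[
X\varrho^j_{a_1,\cdots,a_r}(g)=\frac{1}{|\mathcal{W}|}\int_{\mathfrak{a}^*}\mathcal{H}(\varrho^j_{a_1,\cdots,a_r})(\lambda)\,(X\varphi_{\lambda})(g)\,|\cc(\lambda)|^{-2}\,d\lambda.
\]
The linear growth $|X\varphi_{\lambda}(g)|\le C(g)(1+\|\lambda\|)$ is obtained by differentiating the Harish--Chandra representation $\varphi_{\lambda}(g)=\int_K \e^{(\sqrt{-1}\lambda-\varrho)(H(gk))}dk$ under the integral in $k$.

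By Proposition \ref{conv}, $\mathcal{H}(\varrho^j_{a_1,\cdots,a_r})(\lambda)$ converges pointwise to $\prod_{i=1}^r\varphi_{\lambda}(a_i^{-1})$ as $j\to\infty$. Combining Corollary \ref{cor-est}, the estimate (\ref{harish-chandra-c-function}) on $|\cc(\lambda)|^{-2}$, and Lemma \ref{roots-lambda}, the same polar-coordinate reduction used in the proof of Theorem \ref{prowwww} shows that the function
\[
\lambda\longmapsto (1+\|\lambda\|)\,\Big|\prod_{i=1}^r\varphi_{\lambda}(a_i^{-1})\Big|\,|\cc(\lambda)|^{-2}
\]
is integrable on $\mathfrak{a}^*$ precisely when the one-dimensional integral
\[
\int_0^\infty\frac{t^{l-1}(1+t)^{n-l+1}}{(1+ct)^r}\,dt
\]
converges, which is the case iff $r>n+1$. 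Multiplied by $C(g)$, this integrable expression furnishes the $j$-independent majorant needed for the dominated convergence theorem. Once dominated convergence is applied, one obtains
\[
X\varrho^j_{a_1,\cdots,a_r}(g)\longrightarrow\frac{1}{|\mathcal{W}|}\int_{\mathfrak{a}^*}\prod_{i=1}^r\varphi_{\lambda}(a_i^{-1})\,(X\varphi_{\lambda})(g)\,|\cc(\lambda)|^{-2}\,d\lambda,
\]
and the right-hand side is identified with $X\varrho_{a_1,\cdots,a_r}(g)$ by applying the inversion formula directly to $\varrho_{a_1,\cdots,a_r}$, which is legitimate under $r>n+1$ thanks to the relation $\mathcal{H}(\varrho_{a_1,\cdots,a_r})(\lambda)=\prod_{i=1}^r\varphi_{\lambda}(a_i^{-1})$ coming from Proposition \ref{widehatofprod} and (\ref{equal-fonct-measure}).

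The main obstacle is producing a dominating function that is genuinely uniform in $j$: the Paley--Wiener constants $C_{N,j}$ from \cite{Gangolli} depend on $j$ and cannot be invoked directly. The cleanest workaround is to refine the approximating sequence of Lemma \ref{density}, for instance by taking $\varrho^j_{a_1,\cdots,a_r}=\varrho_{a_1,\cdots,a_r}\ast\psi_j$ for a bi-$K$-invariant approximate identity $\{\psi_j\}$ with $|\mathcal{H}(\psi_j)(\lambda)|\le 1$ and $\mathcal{H}(\psi_j)(\lambda)\to 1$ pointwise. This forces $|\mathcal{H}(\varrho^j_{a_1,\cdots,a_r})(\lambda)|\le\prod_{i=1}^r|\varphi_{\lambda}(a_i^{-1})|$, and therefore reduces the dominating-function question to the integrability already established above.
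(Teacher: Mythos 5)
Your argument follows essentially the same route as the paper: represent $\varrho^j_{a_1,\cdots,a_r}$ by the inversion formula of Theorem \ref{inversion}, differentiate under the integral sign, use the pointwise convergence of $\mathcal{H}(\varrho^j_{a_1,\cdots,a_r})$ from Proposition \ref{conv} together with Proposition \ref{widehatofprod}, bound $X\varphi_\lambda(g)$ by $C(g)(1+\|\lambda\|)$ (the paper cites the Gangolli--Varadarajan estimate from \cite{Anker} for this), and conclude with dominated convergence under the integrability condition $r>n+1$. The one place where you genuinely diverge is the point you flag yourself: the paper simply asserts that ``without loss of generality'' the Paley--Wiener constants $C_{N,j}$ may be taken uniformly bounded, whereas you construct the approximating sequence as $\varrho_{a_1,\cdots,a_r}\ast\psi_j$ so that $|\mathcal{H}(\varrho^j_{a_1,\cdots,a_r})(\lambda)|\le\prod_{i=1}^r|\varphi_\lambda(a_i^{-1})|$, which yields a $j$-independent majorant directly. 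This is a real improvement in rigor over the paper's treatment; the only detail left to check is that $\varrho_{a_1,\cdots,a_r}\ast\psi_j$ still lies in the class $B(K\diagdown G\diagup K)\cap L^1(K\diagdown G\diagup K)$ required by Theorems \ref{planch} and \ref{inversion} (e.g.\ by taking $\psi_j=\chi_j\ast\chi_j^*$ or by invoking a version of the inversion formula valid for compactly supported smooth $K$-bi-invariant functions), and that compact support is preserved so the Paley--Wiener input remains available where needed.
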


\begin{proof}
Applying the inversion formula for the spherical transform, Theorem \ref{inversion}, to the function $\varrho_{a_1,\cdots,a_r}^{j}$, we get
\begin{align*}
\varrho^j_{a_1,\cdots,a_r}(g)
&= c \, \bigintssss_{{\mathfrak{a}}^*} {\mathcal{H}\big(\varrho^j_{a_1,\cdots,a_r}\big)(\lambda)} \varphi_{\lambda}\left( g\right)\left|\cc\left(\lambda \right)  \right|^{-2} d\,\lambda,
\end{align*}
where $c$ is a constant independent of the function $\varrho^j_{a_1,\cdots,a_r}$.\\

Hence, for $X$ in $\mathfrak{g}$, we have,
\begin{align*}
X\varrho^j_{a_1,\cdots,a_r}(g) &= 
\frac{d}{dt}\varrho^j_{a_1,\cdots,a_r}\big(g\exp(tX)\big)_{\mid t=0} \\
&=
c \,\frac{d}{dt} \bigg( \bigintssss_{{\mathfrak{a}}^*} {\mathcal{H}\big(\varrho^j_{a_1,\cdots,a_r}\big)(\lambda)} \varphi_{\lambda}\left( g\exp(tX)\right)\left|\cc\left(\lambda \right)  \right|^{-2} d\,\lambda,\bigg)_{\mid \, t=0}\\
&=
c \, \bigintssss_{{\mathfrak{a}}^*} {\mathcal{H}\big(\varrho^j_{a_1,\cdots,a_r}\big)(\lambda)} \frac{d}{dt}\bigg(\varphi_{\lambda}\left( g\exp(tX)\right) \bigg)_{\mid \, t=0}\left|\cc\left(\lambda \right)  \right|^{-2} d\,\lambda.
\end{align*} 
The justification of the derivation under the integral sign follows the same argument as in section \ref{L2-regularity} and uses Gangolli-Varadarajan estimate (see \cite{Anker}, Proposition 3, (vi)).\\

From Lemma \ref{conv}, we know that
\[\tiny{ 
 {\mathcal{H}\big(\varrho^j_{a_1,\cdots,a_r}\big)(\lambda)} \frac{d}{dt}\bigg(\varphi_{\lambda}\left( g\exp(tX)\right) \bigg)_{\mid \, t=0}\left|\cc\left(\lambda \right)  \right|^{-2} \rightarrow  {\mathcal{H}\big(\varrho_{a_1,\cdots,a_r}\big)(\lambda)} \frac{d}{dt}\bigg(\varphi_{\lambda}\left( g\exp(tX)\right) \bigg)_{\mid \, t=0}\left|\cc\left(\lambda \right)  \right|^{-2}}.
\]
Moreover, as a consequence of Proposition $\ref{widehatofprod}$ we get 
\begin{align*}
\mathcal{H}\big(\varrho_{a_1,\cdots,a_r}\big)(\lambda) \frac{d}{dt}\bigg(\varphi_{\lambda}\left( g\exp(tX)\right) \bigg)_{\mid \, t=0}\left|\cc\left(\lambda \right)  \right|^{-2} &=\prod_{i=1}^{r} \varphi_{\lambda}(a_i^{-1}) \frac{d}{dt}\bigg(\varphi_{\lambda}\left( g\exp(tX)\right) \bigg)_{\mid \, t=0}\left|\cc\left(\lambda \right)  \right|^{-2}.
\end{align*}
With the help of Gangolli-Varadarajan estimate (see \cite{Anker}, Proposition 3, (vi)), we get
$$
\left| \prod_{i=1}^r \varphi_{s \xi}(a_i^{-1}) \frac{d}{dt}\bigg(\varphi_{s \xi}\left( g\exp(tX)\right) \bigg)_{\mid \, t=0}\left|\cc\left(s \xi \right)  \right|^{-2}  \right| \leq C \, \frac{s^{l-1} (1 + s) (1 + s)^{n-l}}{(1 + cs)^r}, 
$$
where $C > 0$ is independent of $g$, and since for $r>n+1$, the function
\[
\prod_{i=1}^r \varphi_{s \xi}(a_i^{-1}) \frac{d}{dt}\bigg(\varphi_{s \xi}\left( g\exp(tX)\right) \bigg)_{\mid \, t=0}\cc\left(s \xi \right)^{-2} \,  \in L^1(\mathfrak{a}^*).
\]	
The Proposition follows from the Paley-Wiener Theorem for spherical functions on semisimple Lie groups (\cite{Gangolli}, Theorem 3.5) and the Lebesgue dominated convergence theorem.
\end{proof}
Let $r>n+1$. Using Proposition \ref{conv-der}, by passing to the limit, we get 
\begin{align*}
X\varrho_{a_1,\cdots,a_r}(g) 
&= c\bigintssss_{{\mathfrak{a}}^*} {\mathcal{H}\big(\nu_{1}\ast...\ast\nu_{r}\big)(\lambda)} \frac{d}{dt}\bigg(\varphi_{\lambda}\left( g\exp(tX)\right) \bigg)_{\mid \, t=0}\left|\cc\left(\lambda \right)  \right|^{-2} d\,\lambda, \\
&= c \, \bigintssss_{{\mathfrak{a}}^*} \prod_{i=1}^r \varphi_{\lambda}(a_i^{-1}) \frac{d}{dt}\bigg(\varphi_{\lambda}\left( g\exp(tX)\right) \bigg)_{\mid \, t=0}\left|\cc\left(\lambda \right)  \right|^{-2} d\,\lambda \big(\text{    by Proposition \ref{widehatofprod}}\big), \\
&= \int_0^\infty \int_{\mathbb{S}^{l-1}} \prod_{i=1}^r \varphi_{s \xi}(a_i^{-1}) \frac{d}{dt}\bigg(\varphi_{s \xi}\left( g\exp(tX)\right) \bigg)_{\mid \, t=0}\left|\cc\left(s \xi \right)  \right|^{-2} \, d \sigma(\xi) \, ds.
\end{align*}
Since the function 
$$
g \longmapsto \prod_{i=1}^r \varphi_{s \xi}(a_i^{-1}) \frac{d}{dt}\bigg(\varphi_{s \xi}\left( g\exp(tX)\right) \bigg)_{\mid \, t=0}\left|\cc\left(s \xi \right)  \right|^{-2}
$$
is infinitely differentiable (see \cite[Proposition 3]{Anker}) and  
$$
\left| \prod_{i=1}^r \varphi_{s \xi}(a_i^{-1}) \frac{d}{dt}\bigg(\varphi_{s \xi}\left( g\exp(tX)\right) \bigg)_{\mid \, t=0}\left|\cc\left(s \xi \right)  \right|^{-2} \right| \leq C \, \frac{s^{l-1} (1 + s) (1 + s)^{n-l}}{(1 + cs)^r}, 
$$
where $C > 0$ is independent of $g$, the differentiation under integral sign theorem implies that if $r > n+1$, then
$$
\varrho_{a_1,\cdots,a_r} \in C^1 (G).
$$
We conclude by iteration that if $r > n + k$, then
$$
\varrho_{a_1,\cdots,a_r} \in C^k (G).
$$
\section{Case of an Arbitrary Symmetric Space of Noncompact Type}\label{general-case}

 Suppose that $G/K$ is an arbitrary symmetric space of noncompact type, not necessarily irreducible. Then by \cite{Ochiai}, each irreducible factor in the de Rham decomposition of $G/K$ is again
a Riemannian symmetric space of the noncompact type. Hence we can write
\begin{equation}
G/K =G_1/K_1 \times \cdots \times G_s/K_s,
\end{equation}
where $G_i/K_i$, $i=1,...,s$ are irreducible symmetric spaces of noncompact type.
Fix left Haar measures $\mu_{G_i}$ on $G_i$, $i=1,...,s$. Then 
\[
\mu_G=\mu_{G_1} \times \cdots \times \mu_{G_s}
\]
is a left Haar measure on $G$.\\
 Let 
\[
a_i=\left(a_i^1,\cdots, a_i^s \right)
\]
be an element of $G$ and assume that $a_i^j \not \in N_{G_j}(K_j)$, where $N_{G_j}(K_j)$ is the normalizer of $K_j$ in $G_j$, $j=1,\cdots, s$. Then it can be seen that the measures $\nu_{a_i}$, defined in section \ref{introduction}, can be written as
\[
\nu_{a_i}=\nu_{a_i^1}\times \cdots \times \nu_{a_i^s}, 
\]
where $\nu_{a_i^j}$ is the measure corresponding to the linear functional
\[
{\mathscr{I}}_{a_i^j}(f)=\int_{K_j} \int_{K_j} f(k_1a_i^jk_2)  d\mu_{K_j}(k_1)d\mu_{K_j}(k_2),
\]
where $\mu_{K_j}$ a fixed Haar measure on $K_j$, and $f$ is a continuous function with compact suppost on $G_j$. 

Applying Ragozin's result to each component of the de Rham decomposition of $G/K$, we deduce that if
\[
r \geq \max_{1\leq i \leq s}{\dim G_i/K_i},
\]
then $ \nu_{a_1^j}\ast...\ast\nu_{a_r^j}$ is absolutely continuous with respect to $\mu_{G_j}$ for $j=1, \cdots s$. If we denote by $\varrho_{a_1^j,\cdots,a_r^j}$ the Radon-Nikodym derivative of $\nu_{a_1^j}\ast...\ast\nu_{a_r^j}$ with respect to the Haar measure $\mu_{G_j}$, then
\[
\varrho_{a_1^j,\cdots,a_r^j}= \frac{d\left( \nu_{a_1^j}\ast...\ast\nu_{a_r^j}\right) }{d\mu_{G_j}} \in  L^{1}(G_j).
\]
Since
\[
\nu_{a_1}\ast...\ast\nu_{a_r}= \left( \nu_{a_1^1}\ast...\ast\nu_{a_r^1}\right)  \times \cdots \times \left( \nu_{a_1^s}\ast...\ast\nu_{a_r^s}\right) , 
\]
we deduce that if $r \geq \max_{1\leq i \leq s}{\dim G_i/K_i}$ then the measure $\nu_{a_1}\ast...\ast\nu_{a_r}$ is absolutely continuous with respect to $\mu_G=\mu_{G_1} \times \cdots \times \mu_{G_s}$. If we denote by $\varrho_{a_1,\cdots,a_r}$ the Radon-Nikodym derivative of $\nu_{a_1}\ast...\ast\nu_{a_r}$ with respect to $\mu_G$, then 
\begin{align*}
\varrho_{a_1,\cdots,a_r}\left(x_1,\cdots, x_s \right) &=\frac{d\left(\nu_{a_1}\ast...\ast\nu_{a_r} \right) }{d\mu_G}\\
&=
\frac{d\bigg(\left( \nu_{a_1^1}\ast...\ast\nu_{a_r^1}\right)  \times \cdots \times \left( \nu_{a_1^s}\ast...\ast\nu_{a_r^s}\right)\bigg)}{d\left(\mu_{G_1} \times \cdots \times \mu_{G_s} \right) }\\
&=\frac{d\left( \nu_{a_1^1}\ast...\ast\nu_{a_r^1}\right) }{d\mu_{G_1}}\left(x_1 \right)  \cdots  \frac{d\left( \nu_{a_1^s}\ast...\ast\nu_{a_r^s}\right) }{d\mu_{G_s}}\left(x_s \right)   \\
&=\varrho_{a_1^1,\cdots,a_r^1}(x_1) \cdots \varrho_{a_1^s,\cdots,a_r^s}(x_s) \in L^1(G).
\end{align*}
Hence
\begin{align}
\int_G \left| \varrho_{a_1,\cdots,a_r}\right|^2d\,\mu_G &=\prod_{i=1}^s \int_{G_i}\left|\varrho_{a_1^i,\cdots,a_r^i} \right|^2d\,\mu_{G_i}. \label{produit} 
\end{align}
As a consequence of $(\ref{produit})$ and part (i) of the Main Theorem, we deduce that if
\[
r > \max_{1\leq i \leq s}{\dim G_i/K_i}, 
\]
 then
\[
\varrho_{a_1,\cdots, a_r} \in L^2(G).
\]
Similarly, applying part (ii) of the Main Theorem, we deduce that if
\[
r>\max_{1\leq i \leq s}{\dim G_i/K_i}+k,
\]
then
\[
\varrho_{a_1,\cdots,a_r} \in C^k(G).\\
\]

\begin{acknowledgment*}
It is a great pleasure for me to thank Ka\"is Ammari for helpful suggestions and his careful reading of the paper. It's also a great pleasure to thank Mahmoud Al-Hashami for pointing out some misprints in an earlier version of the paper. 
\end{acknowledgment*}

\end{document}